\documentclass[final,3p,times]{article}

\usepackage{lineno,hyperref}
\modulolinenumbers[1]

\usepackage{graphicx}
\usepackage{amssymb}
\usepackage{mathtools}
\usepackage{amsthm}
 \usepackage{amsmath}
\usepackage{dsfont}
\usepackage{epsfig}
\usepackage{float}
\usepackage{epstopdf}
\usepackage{subfigure}
\usepackage{cite}
\usepackage{xcolor}
\usepackage{latexsym,amssymb}
\usepackage{pgfplots}
\usepackage{amsmath}
\usepackage{authblk}

\newcommand{\gauss}[2]{\genfrac{[}{]}{0pt}{}{#1}{#2}_q}
\newcommand{\vp}{\varphi} 
\newcommand{\al}{\alpha}
 
\newcommand{\la}{\lambda}

\numberwithin{equation}{section}

\theoremstyle{definition}

\newtheorem{theorem}{Theorem}[section]
\newtheorem{lemma}[theorem]{Lemma}
\newtheorem{proposition}[theorem]{Proposition}
\newtheorem{corollary}[theorem]{Corollary}
\newtheorem{definition}[theorem]{Definition}
\newtheorem{remark}[theorem]{Remark}
\newtheorem{exmp}[theorem]{Example}

\theoremstyle{remark}

	\usetikzlibrary{plotmarks}
\usetikzlibrary{external}
\pgfplotsset{compat=1.3}
\newlength\figurewidth 
\newlength\figureheight
\tikzset{external/force remake=true}
\usepackage{microtype} 

\usepackage[margin=0.88in]{geometry}

\date{}

\begin{document}



\title{$h$--$\gamma$ Blossoming, $h$--$\gamma$ Bernstein Bases, and $h$--$\gamma$ B\'{e}zier Curves for Translation Invariant $\left(\gamma_{1},\gamma_{2}\right)$ Spaces}

\author[a]{Fatma Z\"{u}rnac{\i}-Yeti\c{s}}
\author[b]{Ron Goldman}
\author[c]{Plamen Simeonov \footnote{\textbf{Email addresses:} $^a$fzurnaci@itu.edu.tr, $^b$rng@cs.rice.edu  $^c$simeonovp@uhd.edu, }}

\affil[a]{Department of Mathematics Engineering, Istanbul Technical University,  Maslak, Istanbul, 34469, Turkiye}

\affil[b]{Department of Computer Science, Rice University, Houston, Texas, 77251, USA}

\affil[c]{Department of Mathematics and Statistics, University of Houston-Downtown, Houston, Texas 77002, USA}

\maketitle

\begin{abstract}
	A $\left(\gamma_{1}, \gamma_{2}\right)$ space of order $n$ is a space of univariate functions spanned by $\left\{\gamma_{1}^{n-k}(x), \gamma_{2}^{k}(x)\right\}_{k=0}^{n}$. A $\left(\gamma_{1}, \gamma_{2}\right)$ space is said to be translation invariant if $\gamma_{1}(x-h)$ and $\gamma_{2}(x-h)$ can be expressed as nonsingular linear combinations of $\gamma_{1}(x)$ and $\gamma_{2}(x)$. Translation invariant $\left(\gamma_{1}, \gamma_{2}\right)$  spaces include polynomials $\left(\gamma_{1}(x)=1, \gamma_{2}(x)=x\right)$, trigonometric functions $\left(\gamma_{1}(x)=\cos x, \gamma_{2}(x)=\sin x\right)$, hyperbolic functions $\left(\gamma_{1}(x)=\cosh x, \gamma_{2}(x)=\sinh x\right)$, and their discrete analogues. We merge $\gamma$-blossoming for $\left(\gamma_{1}, \gamma_{2}\right)$ spaces with $h$-blossoming for $h$-Bernstein bases and $h$-B\'{e}zier curves to construct a novel $h$--$\gamma$ blossom for translation invariant $\left(\gamma_{1}, \gamma_{2}\right)$ spaces generated by two continuous, linearly independent
	functions $\gamma_{1}$ and $\gamma_{2}$. Based on this $h$--$\gamma$ blossom, we define $h$--$\gamma$ Bernstein bases and $h$--$\gamma$ B\'{e}zier curves and study their properties. We derive recursive evaluation algorithms, subdivision procedures, Marsden identities, and formulas for degree elevation and interpolation for these $h$--$\gamma$ Bernstein and $h$--$\gamma$ B\'{e}zier schemes.
\end{abstract}
 




\section{Introduction}\label{section1}
A $\left(\gamma_{1}, \gamma_{2}\right)$ space of order $n$ is a space of univariate functions given by $\pi_{n}\left(\gamma_{1}, \gamma_{2}\right)=\text{span}\left\{\gamma_{1}^{n-k}(x), \gamma_{2}^{k}(x)\right\}_{k=0}^{n}$. A $\left(\gamma_{1}, \gamma_{2}\right)$ space is said to be translation invariant if $\begin{pmatrix}\gamma_1(x-h)\\\gamma_2(x-h)\end{pmatrix}
= C(h) \begin{pmatrix}\gamma_1(x)\\\gamma_2(x)\end{pmatrix}$, where $C(h)$ is a nonsingular $2 \times 2$ matrix. Many common spaces are translation invariant $\left(\gamma_{1}, \gamma_{2}\right)$ spaces, including polynomials  (\(\gamma_{1}(x)=1\), \(\gamma_{2}(x)=x\)),
trigonometric functions (\(\gamma_{1}(x)=\cos x\), \(\gamma_{2}(x)=\sin x\)),
hyperbolic functions (\(\gamma_{1}(x)=\cosh x\), \(\gamma_{2}(x)=\sinh x\)), and spaces of the discrete analogues of trigonometric and hyperbolic functions (see Section \ref{section2}).

The $\left(\gamma_{1}, \gamma_{2}\right)$ spaces were first introduced by Gonsor and Neamtu in \cite{gonsor} to extend polynomial techniques such as blossoming to more general spaces such as spaces of trigonometric functions. Lyche \cite{lyche} also formulates a notion of blossoming for trigonometric functions in order to study properties of trigonometric splines. An alternative approach to blossoming for $\left(\gamma_{1}, \gamma_{2}\right)$ spaces is provided by Di{\c{s}}ib{\"u}y{\"u}k  and Goldman in \cite{cetin2,cetin3}.

The $h$-blossom was first introduced by Simeonov et al in \cite{ron1} to study the $h$-Bernstein bases and $h$-B\'{e}zier curves initiated in Approximation Theory by  Stancu \cite{stancu1,stancu2}, and rediscovered in CAGD by Goldman \cite{goldman1}, and Goldman and Barry \cite{goldman2}. The $h$-Bernstein bases and $h$-B\'{e}zier curves, however, are still polynomial schemes albeit represented in a different basis from classical Bernstein-B\'{e}zier schemes.

\textit{The goal of this paper is to merge $\gamma$-blossoming for $\left(\gamma_{1}, \gamma_{2}\right)$ spaces with $h$-blossoming for $h$-Bernstein bases and $h$-B\'{e}zier curves to construct a novel $h$--$\gamma$ blossom and novel $h$--$\gamma$ Bernstein bases and $h$--$\gamma$ B\'{e}zier curves for  translation invariant $\left(\gamma_{1}, \gamma_{2}\right)$ spaces generated by two continuous, linearly independent
		functions $\gamma_{1} $ and $ \gamma_{2}$}.  The parameter $h$ serves not only as a new shape parameter for $\left(\gamma_{1}, \gamma_{2}\right)$ spaces, but also more importantly allows for interpolation formulas not present in conventional $\gamma$-Bernstein and $\gamma$-B\'{e}zier schemes.
 
 We begin in Section \ref{section2} by introducing the notion of translation invariance and providing several examples of translation invariant $\left(\gamma_{1}, \gamma_{2}\right)$ spaces, including classical trigonometric and hyperbolic spaces and their discrete analogues. In Section \ref{section3} we define the $h$--$\gamma$ blossom and establish the existence and uniqueness of this $h$--$\gamma$ blossom for translation invariant $\left(\gamma_{1}, \gamma_{2}\right)$ spaces generated by two continuous, linearly independent functions $\gamma_{1} $ and $ \gamma_{2}$. (In an Appendix we provide conditions which help to guarantee the existence of this blossom by establishing conditions that guarantee the linear independence of certain functions used in the proof of existence.). We also provide explicit formulas for the $h$--$\gamma$ blossom of certain key functions. Section \ref{section4} is devoted to developing recursive formulas for evaluating the $h$--$\gamma$ blossom. 

In Section \ref{section5}, we introduce the $h$--$\gamma$ Bernstein basis functions and $h$--$\gamma$ B\'{e}zier curves. We establish a two-term recurrence for the $h$--$\gamma$ Bernstein basis functions analogous to the standard two-term recurrence for the classical Bernstein basis functions. We show that the blossom evaluated at special values provides the dual functionals (i.e. coefficients in the $h$--$\gamma$ Bernstein basis) for $h$--$\gamma$ B\'{e}zier curves. We then use this dual functional property to derive recursive evaluation algorithms, subdivision procedures, Marsden identities, and
formulas for degree elevation and interpolation for these $h$--$\gamma$ Bernstein and $h$--$\gamma$ B\'{e}zier schemes.

\section{Translation Invariance}\label{section2}
In all of the results in this paper we will assume that the functions $\gamma_1$ and $\gamma_2$ are defined and continuous on an open set which contains all the parameter values where these functions are evaluated.
\begin{definition}
	The functions	$\Gamma=(\gamma_{1}(x),\gamma_{2}(x))$ are translation invariant if  there exists an invertible $2\times 2$ matrix function $C=C(h)$ such that
	\[
	\begin{pmatrix}\gamma_1(x-h)\\[2pt]\gamma_2(x-h)\end{pmatrix}
	= C \begin{pmatrix}\gamma_1(x)\\[2pt]\gamma_2(x)\end{pmatrix}.
	\]
\end{definition} 
We will use translation invariance to establish the existence of the $h$--$\gamma$ blossom.
\begin{exmp}[\textbf{Polynomials}]\label{ex1}
	The functions $\Gamma= (1, x)$ are translation invariant.
\end{exmp}
\begin{exmp}[\textbf{Classical Trigonometric Functions}]\label{e2}
	 The functions $\Gamma = (\cos(x) , \sin(x))$ are translation invariant, since
	\begin{align}\label{trig1}
		&\cos(x-h) = \cos h \hspace{0.05em}\cos x+\sin h \hspace{0.05em}\sin x, \\ \label{trig2}
		&\sin(x-h) = \cos h \hspace{0.05em} \sin x-\sin h \hspace{0.05em}\cos x.
	\end{align}
\end{exmp}
\begin{exmp}[\textbf{Discrete Trigonometric Functions}]\label{trigex}
	To construct discrete trigonometric functions, we begin with a discrete analogue of the exponential function. For $d>-1$, $d \neq 0$ define
	\begin{align*}
		 e_{d}=(1+d)^{1 / d},\,\,\, e_{d}^{x}=(1+d)^{x/d}.
	\end{align*}
	Notice that $e_{d} \rightarrow e$ and $e_{d}^{x} \rightarrow e^{x}$ as $d \rightarrow 0$.\\
	Now in analogy with the classical trigonometric functions, define the discrete versions of sine and cosine by setting
	\begin{align*}
		 \cos _{d}x=\frac{e_{d}^{i x}+e_{d}^{-i x}}{2}, \,\,\, \sin _{d}x=\frac{e_{d}^{i x}-e_{d}^{-i x}}{2 i}.
	\end{align*}
	Then it is straightforward to verify that these discrete versions of sine and cosine satisfy many of the same identities as the classical sine and cosine. In particular, (\ref{trig1}) and (\ref{trig2}) are satisfied with $\cos$ and $\sin$ replaced by $\cos_{d}$ and $\sin_{d}$.
	Therefore the functions $\Gamma=\left(\cos _{d}x, \sin _{d}x\right)$ are translation invariant.
\end{exmp}
\begin{exmp}[\textbf{Hyperbolic and Discrete Hyperbolic Functions}]\label{ex4}
The functions $\Gamma=(\cosh x, \sinh x)$ are translation invariant, since
\begin{align}\label{hyperid1}
	\cosh (x-h)=\cosh h \hspace{0.05em}\cosh x-\sinh h \hspace{0.05em} \sinh x, & \\\label{hyperid2}
	\sinh (x-h)=\cosh h \hspace{0.05em}\sinh x-\sinh h \hspace{0.05em} \cosh x. &
\end{align}
Let $e_{d}^{x}$ be as in Example \ref{trigex}. The discrete versions of $\cosh$ and $\sinh$ are defined in analogy with the classical versions of $\cosh$ and $\sinh$ by setting
$$
\cosh _{d}x=\cos_{d}(ix)=\frac{e_{d}^{x}+e_{d}^{-x}}{2} \quad \text { and } \quad \sinh _{d}x=i \sin_{d}(ix)=\frac{e_{d}^{x}-e_{d}^{-x}}{2} .
$$
Now it is once again straightforward to verify from these definitions that these discrete versions of $\cosh$ and $\sinh$ satisfy many of the same identities as the classical versions of $\cosh$ and $\sinh$. In particular,  (\ref{hyperid1}) and (\ref{hyperid2}) are satisfied with $\cosh$ and $\sinh$ replaced by $\cosh_{d}$ and $\sinh_{d}$. Therefore the functions $\Gamma=\left(\cosh _{d}x, \sinh _{d}x\right)$ are translation invariant.
\end{exmp}
\begin{exmp}[\textbf{Products of Exponentials with Translation Invariant Functions}]
If the functions $\Gamma=\left(\gamma_{1}(x), \gamma_{2}(x)\right)$ are translation invariant, then the functions $\Gamma=\left(e^{x} \gamma_{1}(x), e^{x} \gamma_{2}(x)\right)$ and $\Gamma=\left(e_{d}^{x} \gamma_{1}(x), e_{d}^{x} \gamma_{2}(x)\right)$ are also translation invariant.
\end{exmp}
\section{Blossoming}\label{section3}
Di{\c{s}}ib{\"u}y{\"u}k and Goldman  \cite{cetin2} extended the notion of blossoming from the space of homogeneous
polynomials to the space $\pi_{n}(\gamma_{1},\gamma_{2})$ of homogeneous polynomials in the functions $(\gamma_{1},\gamma_{2})$. We
shall now construct an $h$-version  by altering the diagonal property of the non-polynomial
homogeneous blossom.

\begin{definition}
	Let $G(t)\in \pi_{n}(\gamma_{1},\gamma_{2})$. Then $g\left(\left(u_{1}, v_{1}\right), \ldots,\left(u_{n}, v_{n}\right); h\right)$ is an $h$--$\gamma$ blossom of $G$ if $g$ satisfies the following three axioms:
	
	\begin{enumerate}
		\item Symmetry: For every permutation $\sigma$ of $\{1, \ldots, n\}$
	\end{enumerate}\begin{align*}
		g\left(\left(u_{\sigma(1)}, v_{\sigma(1)}\right), \ldots,\left(u_{\sigma(n)}, v_{\sigma(n)}\right) ; h\right)=g\left(\left(u_{1}, v_{1}\right), \ldots,\left(u_{n}, v_{n}\right) ; h\right)
	\end{align*}
	
	\begin{enumerate}
		\setcounter{enumi}{1}
		\item Multilinear: For $i=1, \ldots, n$,
		\begin{align*}
			&g\left(\left(u_{1}, v_{1}\right), \ldots, a\left(u_{i}, v_{i}\right)+b\left(x_{i}, w_{i}\right), \ldots,\left(u_{n}, v_{n}\right) ; h\right)\\
			&=a g\left(\left(u_{1}, v_{1}\right), \ldots,\left(u_{i}, v_{i}\right), \ldots,\left(u_{n}, v_{n}\right) ; h\right)+b g\left(\left(u_{1}, v_{1}\right), \ldots,\left(x_{i}, w_{i}\right), \ldots,\left(u_{n}, v_{n}\right) ; h\right)
		\end{align*}
		
		\item $h$--$\gamma$ Diagonal:
	\end{enumerate}
	\begin{align*}
		g\left(\left(\gamma_{1}(t), \gamma_{2}(t)\right),\left(\gamma_{1}(t-h), \gamma_{2}(t-h)\right), \ldots\left(\gamma_{1}(t-(n-1)h), \gamma_{2}(t-(n-1)h)\right);h\right)=G(t).
	\end{align*}
\end{definition}
\begin{remark}
	The $h$--$\gamma$ blossom 
	$g\big((u_{1},v_{1}),\dots,(u_{n},v_{n});h\big)$
	is continuous. This follows since this blossom is symmetric and multilinear in its arguments, and every multilinear map is continuous. Along the $h$--$\gamma$ diagonal $g$ reduces to $G(t)$, which is continuous because $G \in \pi_{n}(\gamma_{1},\gamma_{2})$ and $\gamma_{1}$, $\gamma_{2}$ are continuous
	functions.
	
\end{remark}

To establish the existence and uniqueness of the $h$--$\gamma$ blossom, we shall use a different basis for $\pi_{n}(\gamma_{1},\gamma_{2})$ more compatible with the $h$--$\gamma$ diagonal property. For a fixed value of $h$, let $\mathcal{G}=\text{span}\left\{G_{n,k}(t;h)\right\}^{n}_{k=0}$, where
\begin{align}\label{hbasis}
	G_{n,k}(t;h)=\sum \prod_{j=1}^{k} \gamma_1\left(t-\left(i_{j}-1\right) h\right) \prod_{l=k+1}^{n} \gamma_2\left(t-\left(i_{l}-1\right) h\right)
\end{align}
and the sum is taken over all subsets $\left\{i_{1}, \ldots, i_{k}\right\}$ of $\{1, \ldots, n\}$.
Define 
\begin{align}
	d(t,x)=\gamma_{1}(t)\gamma_{2}(x)-\gamma_{2}(t)\gamma_{1}(x),
\end{align}
\begin{align}\label{basefunc}
\left(d(t,x)\right)^{n}_{h}=\prod_{j=0}^{n-1} d(t-jh,x)=	(\gamma_1(t)\gamma_2(x) - \gamma_2(t)\gamma_1(x))_h^n.
\end{align}
From \eqref{hbasis} and \eqref{basefunc}, we find that
\begin{equation}\label{rgenel}
	(\gamma_1(t)\gamma_2(x) - \gamma_2(t)\gamma_1(x))_h^n = \sum_{k=0}^{n} (-1)^{n-k} G_{n,k}(t;h) \gamma_2(x)^k \gamma_1(x)^{n-k}.
\end{equation}

In Appendix A, we will show that  for most values of $h$ the functions $\left\{G_{n,k}(t;h)\right\}_{k=0}^{n}$ are linearly independent.  The following example shows that the linear independence of the
	functions $\left\{G_{n,k}(t;h)\right\}_{k=0}^{n}$ may fail for certain values of $h$, 
even when $\gamma_1$ and $\gamma_2$ themselves are continuous, linearly independent, and translation invariant.

\begin{exmp}
	Consider the trigonometric functions $\gamma_{1}(x)=\cos x$ and $\gamma_{2}(x)=\sin x$ from Example \ref{e2}. For $n=2$, using (\ref{hbasis}), we obtain
	\begin{align*}
		G_{2,0}(t;h)&= 	\sin(t-h)\sin t = \cos h\,\sin^2 t - \sin h\,\sin t\cos t,\\
		G_{2,1}(t;h)&=\sin(t-h)\cos t + \sin t\cos(t-h)
		= 2\cos h\,\sin t\cos t + \sin h \hspace{0.05em}(\sin^2 t - \cos^2 t),\\
		G_{2,2}(t;h)&=\cos(t-h)\cos t = \cos h\,\cos^2 t + \sin h\,\sin t\cos t.
	\end{align*}
	Therefore, 
	\[
	\begin{pmatrix}G_{2,0}(t;h)\\ G_{2,1}(t;h)\\G_{2,2}(t;h)\end{pmatrix}
	= M \begin{pmatrix}\sin^2 t\\\sin t \cos t \\\cos^2 t\end{pmatrix}, \quad 	M =
	\begin{pmatrix} \cos h & -\sin h &0  \\\sin h & 2 \cos h &-\sin h\\     0 &\sin h& \cos h\end{pmatrix} .
	\]
	Then, by the linear independence of the functions $\gamma_1^k\gamma_{2}^{2-k}$, $k=0,1,2$, the functions $G_k(t;h)$ are linearly independent if and only if
	\[
	\det M = 2 \cos h\,(\cos^2 h+\sin^2 h)= 2\cos h \neq 0,
	\]
	which is so if and only if  $h\neq \frac{\pi}{2}+k\pi$, $k \in \mathbb{Z}$.
\end{exmp}
%
%
From here on we consider only those values of $h$ for which
the functions $\left\{G_{n,k}(t;h)\right\}_{k=0}^{n}$ are linearly independent (see Appendix A).
\begin{lemma}\label{lemmabasis}
For every fixed value of $h$, the functions	$\left\{G_{n,k}(t;h)\right\}^{n}_{k=0}$ are a basis for the space $\pi_{n}(\gamma_{1},\gamma_{2})$.
\end{lemma}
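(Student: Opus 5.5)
The plan is to show two things: each $G_{n,k}(t;h)$ lies in $\pi_{n}(\gamma_{1},\gamma_{2})$, and the $n+1$ functions $\{G_{n,k}(t;h)\}_{k=0}^{n}$ are linearly independent. Linear independence is the standing assumption stated just before the lemma, for the admissible values of $h$ (see Appendix A). Moreover, $\pi_{n}(\gamma_{1},\gamma_{2})$ is spanned by the $n+1$ functions $\gamma_1^{n-k}\gamma_2^{k}$. So once containment is shown we have $n+1$ linearly independent elements in a space of dimension at most $n+1$. Hence they form a basis, and, as a by-product, $\dim \pi_{n}(\gamma_{1},\gamma_{2})=n+1$.

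For containment I would use translation invariance. For each $j$, write $C(h)^{j}=C(jh)$, or simply iterate the defining relation $j$ times. This gives
\[
\begin{pmatrix}\gamma_1(t-jh)\\ \gamma_2(t-jh)\end{pmatrix} = C(h)^{j}\begin{pmatrix}\gamma_1(t)\\ \gamma_2(t)\end{pmatrix},
\]
so every shifted function $\gamma_i(t-(i_l-1)h)$ is a linear combination $a\gamma_1(t)+b\gamma_2(t)$ with coefficients depending only on $h$.

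Each summand in \eqref{hbasis} is a product of exactly $n$ such factors. Expanding the product by multilinearity gives a sum of monomials $\gamma_1(t)^{n-m}\gamma_2(t)^{m}$, each homogeneous of total degree $n$. Therefore each summand, and hence each $G_{n,k}(t;h)$, lies in $\pi_{n}(\gamma_{1},\gamma_{2})$.

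An equivalent route uses \eqref{rgenel}. The left side $\prod_{j=0}^{n-1} d(t-jh,x)$ is, for each fixed $x$, a product of $n$ factors, each linear in $(\gamma_1(t),\gamma_2(t))$ by translation invariance. Comparing coefficients of $\gamma_2(x)^k\gamma_1(x)^{n-k}$ then expresses $G_{n,k}$ in terms of the monomials. This comparison needs independence of those monomials in $x$, so the direct expansion above is cleaner.

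The only real subtlety is the dimension count. The monomials $\gamma_1^{n-k}\gamma_2^{k}$ span $\pi_n$ by definition, so $\dim\pi_n\le n+1$, and no independence of the monomials is required. Linear independence of the $G_{n,k}$ is exactly the hypothesis imposed on $h$. With both in hand, the conclusion is immediate, so I do not expect a genuine obstacle beyond verifying that iterated translation stays linear.
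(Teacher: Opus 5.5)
Your proposal is correct and follows the paper's proof: linear independence of the $G_{n,k}$ is taken as the standing hypothesis on $h$, and each $G_{n,k}$ lies in $\pi_n(\gamma_1,\gamma_2)$ because iterating translation invariance gives $(\gamma_1(t-mh),\gamma_2(t-mh))^T = C^m(\gamma_1(t),\gamma_2(t))^T$, so each $n$-fold product expands into the monomials $\gamma_1^{n-m}\gamma_2^{m}$. You also state explicitly that $\dim \pi_n(\gamma_1,\gamma_2)\le n+1$ simply because $n+1$ monomials span it, with no independence of the monomials required; the paper uses this step without spelling it out.
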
 
\begin{proof}
	Since, the functions $G_{n,k}(t;h)$, $k = 0, \dots, n$, are linearly independent  it suffices to verify that each
	$G_{n,k}(t;h) \in \pi_n(\gamma_1, \gamma_2)$.  By translation invariance,
	\begin{align*}
	\begin{pmatrix}
		\gamma_1(t - mh) \\
		\gamma_2(t - mh)
	\end{pmatrix}
	= C^m
	\begin{pmatrix}
		\gamma_1(t) \\
		\gamma_2(t)
	\end{pmatrix}, \quad m\in \mathbb{N}_0.
		\end{align*}
		This relation and (\ref{hbasis}) show that $G_{n,k} \in \pi_n(\gamma_1, \gamma_2)$, $k=0, \ldots,n$.

\end{proof}

\begin{theorem}\label{theorem:blossom}
	Let $G\in \pi_{n}(\gamma_{1},\gamma_{2})$. Then there exists a unique function  $g\left(\left(u_{1}, v_{1}\right), \ldots,\left(u_{n}, v_{n}\right); h\right)$  that is symmetric, multilinear, and reduces to $G$ along $h$--$\gamma$ diagonal.
\end{theorem}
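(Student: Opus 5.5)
Existence will come from Lemma \ref{lemmabasis}: since $\{G_{n,k}(t;h)\}_{k=0}^n$ is a basis of $\pi_n(\gamma_1,\gamma_2)$, every $G$ can be written uniquely as $G(t)=\sum_{k=0}^n c_k G_{n,k}(t;h)$. I will take the blossom of each $G_{n,k}$ to be the elementary symmetric multilinear form
\[
g_{n,k}\bigl((u_1,v_1),\ldots,(u_n,v_n);h\bigr)=\sum_{\{i_1,\ldots,i_k\}\subseteq\{1,\ldots,n\}} \prod_{j=1}^{k} u_{i_j}\prod_{l=k+1}^{n} v_{i_l},
\]
where $\{i_{k+1},\ldots,i_n\}$ is the complementary index set, and set $g=\sum_k c_k g_{n,k}$. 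Each $g_{n,k}$ is symmetric, because a permutation of the arguments just permutes the subsets, and multilinear, because every term contains each pair $(u_i,v_i)$ exactly once through one of its coordinates. Substituting $(u_i,v_i)=(\gamma_1(t-(i-1)h),\gamma_2(t-(i-1)h))$ turns $g_{n,k}$ term by term into the defining sum \eqref{hbasis} for $G_{n,k}(t;h)$. So the $h$--$\gamma$ diagonal property holds for each basis function, and by linearity it holds for $g$.

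For uniqueness, I will use the standard fact that a symmetric multilinear function of $n$ pairs is determined by the $n+1$ numbers $b_k=g(e_1^{k},e_2^{n-k})$, where $e_1=(1,0)$, $e_2=(0,1)$, and $e_1^k$ means $e_1$ repeated $k$ times. Expanding each argument as $(u_i,v_i)=u_ie_1+v_ie_2$ gives $g=\sum_k b_k g_{n,k}$. So every candidate blossom lies in the $(n+1)$-dimensional span of the $g_{n,k}$. Restricting to the diagonal, any such $g$ yields $\sum_k b_k G_{n,k}(t;h)$. If two blossoms $g,\tilde g$ both reduce to $G$, then $\sum_k (b_k-\tilde b_k)G_{n,k}(t;h)\equiv 0$. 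Linear independence of the $G_{n,k}$, which is the standing assumption on $h$ and the content of Lemma \ref{lemmabasis}, forces $b_k=\tilde b_k$, hence $g=\tilde g$.

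The only step needing care is the uniqueness expansion. I will write out the multilinear expansion of $g(u_1e_1+v_1e_2,\ldots,u_ne_1+v_ne_2)$ into $2^n$ terms and use symmetry to collapse each term onto $b_k$, where $k$ is the number of $e_1$ slots. Everything else reduces to Lemma \ref{lemmabasis}, so the real substance is that lemma's independence hypothesis, which the theorem assumes rather than proves; the proof itself is essentially bookkeeping.
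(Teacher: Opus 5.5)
Your proposal is correct and follows essentially the same route as the paper. Existence comes from expanding $G$ in the basis $\{G_{n,k}(t;h)\}_{k=0}^n$ and replacing each $G_{n,k}$ by the corresponding elementary symmetric multilinear form, and uniqueness comes from writing any symmetric multilinear candidate in those forms and invoking linear independence of the $G_{n,k}$; the only difference is that you prove the spanning fact explicitly by expanding in $e_1,e_2$, whereas the paper simply cites it.
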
 
\begin{proof}
To prove existence, let $G\in \pi_{n}(\gamma_{1},\gamma_{2})$. By Lemma \ref{lemmabasis}, $G(t)=\sum_{k=0}^{n} c_{k} G_{n,k}(t;h)$. Let
\begin{align}\label{blossom}
	g\left(\left(u_{1}, v_{1}\right), \ldots,\left(u_{n}, v_{n}\right); h\right)=\sum_{k=0}^{n} c_{k} \sum u_{i_{1}} \ldots u_{i_{k}} v_{i_{k+1}} \ldots v_{i_{n}},
\end{align}
 where the inside sum is taken over all subsets $\left\{i_{1}, \ldots, i_{k}\right\}$ of $\{1, \ldots, n\}$. Then $g$ is an $h$--$\gamma$ blossom of $G(t)$ because $g\left(\left(u_{1}, v_{1}\right), \ldots,\left(u_{n}, v_{n}\right); h\right)$ is symmetric, multilinear, and along the $h$--$\gamma$ diagonal
 \begin{align*}
 &	g\left(\left(\gamma_{1}(t), \gamma_{2}(t)\right),\left(\gamma_{1}(t-h), \gamma_{2}(t-h)\right), \ldots,\left(\gamma_{1}(t-(n-1)h), \gamma_{2}(t-(n-1)h)\right);h\right)\\&=\sum_{k=0}^{n} c_{k} \sum \prod_{j=1}^{k} \gamma_1\left(t-\left(i_{j}-1\right) h\right) \prod_{l=k+1}^{n} \gamma_2\left(t-\left(i_{l}-1\right) h\right)\\&=\sum_{k=0}^{n} c_{k} G_{n,k}(t;h).
 \end{align*}
 To prove uniqueness, suppose that $g_1$ and $g_2$ are two distinct blossoms of $G$. Then since the homogeneous elementary symmetric functions in $n$ variables $\left\{s_{n,k}\right\}_{k=0}^{n}$ are a basis for the symmetric multilinear functions in $n$ variables, there are the constants $\{c_k \}$ and $\{d_k \}$, such that $g_1=\sum_{k=0}^{n}c_k s_k $ and $ g_2=\sum_{k=0}^{n}d_k s_k$. Evaluating $g_1$ and $g_2$ along the $h$--$\gamma$ diagonal gives
$ 	G=\sum_{k=0}^{n}c_k G_{n,k}=\sum_{k=0}^{n}d_k G_{n,k}.$
 Since $\left\{G_{n,k}\right\}_{k=0}^{n}$ are linearly independent, $c_k=d_k$, $k=0,\ldots,n$. Therefore the blossom of $G$ is unique.
\end{proof}
\begin{exmp}\label{polyblossom}
	Let $\gamma_{1} = 1$, $\gamma_{2} = x$. Then for all $n$, we have 
	\(1 \in \pi_{n}(\gamma_{1}, \gamma_{2})\).
	The $h$--$\gamma$ blossom of the function $G(x) = 1$ in the space 
	\(\pi_{n}(\gamma_{1}, \gamma_{2})\) is the function $
	g_{n}\big( (u_{1},v_{1}), \ldots, (u_{n},v_{n}); h \big)
	= u_{1} \cdots u_{n}$.
\end{exmp}

\begin{exmp}\label{expblossom1}
		Let $\gamma_{1}(x)=\cos x$, $\gamma_{2}(x)=\sin x$, and $n$ be even. Here we will find the $h$-$\gamma$ blossom of the function $G(x) = 1$ in the space $\pi_{n}\left(\cos x, \sin x\right)$. Observe that 
		\begin{align*}
			1=(\cos^2 x+\sin^2 x )^{n/2} \in \pi_{n}\left(\cos x,\sin x \right).
		\end{align*}
		Therefore, by Theorem \ref{theorem:blossom}, the function $G(x)=1$ has an $h$--$\gamma$ blossom for all even dimensional spaces.
		Assume that $h\in\mathbb{R}$ is such that
		\[
		\sum_{P\in\mathcal{P}_{n}}\;\prod_{(i,j)\in P}\cos\big((i-j)h\big)\;\neq\;0,
		\]
		where $\mathcal{P}_{n}$ is the set of all pairings of $\{1,\dots,n\}$ -- that is, $\mathcal{P}_{n}$ is the set of all ways to group the numbers 
		$1,\ldots,n$ into $n/2$ separate pairs, with every index appearing in exactly one pair.  Then, the $h$--$\gamma$ blossom of the function $G(x)=1$  has the form
		\begin{align}\label{trigblossom}
			g\big((u_1,v_1),\dots,(u_{n},v_{n});h\big)
			=
			c_n(h)\,
			\sum_{P\in\mathcal{P}_{n}}
			\;\prod_{(i,j)\in P}\bigl(u_i u_j + v_i v_j\bigr),
		\end{align}
		where 
		\begin{align} \label{c(h)-Ex8}
			c_n(h)
			=\left(
			\sum_{P\in\mathcal{P}_{n}}
			\;\prod_{(i,j)\in P}\cos\bigl((i-j)h\bigr)\right)^{-1}.
		\end{align}
		The $h$--$\gamma$ blossom  (\ref{trigblossom}) is  symmetric in its arguments because it is defined as a sum over all pairings of the index set, and permuting the input pairs $(u_i,v_i)$ merely permutes the terms of this sum without changing its value. 
		The $h$--$\gamma$  blossom (\ref{trigblossom}) is also multilinear because for each pairing $P$, a given argument $(u_\ell,v_\ell)$ appears only in a single factor of the corresponding product of the form
	$u_\ell u_j + v_\ell v_j$. Hence the blossom is linear in  each of its $n$ arguments.	To verify the $h$--$\gamma$ diagonal property, for any pair $(i,j)$ we compute the product:
		\[
		\begin{aligned}
			u_i u_j + v_i v_j
			=
			\cos\!\bigl(t-(i-1)h\bigr)\cos\!\bigl(t-(j-1)h\bigr)
			+\sin\!\bigl(t-(i-1)h\bigr)\sin\!\bigl(t-(j-1)h\bigr)
			=  \cos\bigl((i-j)h\bigr).
		\end{aligned}
		\]
 Therefore  (\ref{trigblossom}) becomes
		\[
		g\bigl(\Gamma(t),\Gamma(t-h),\dots,\Gamma(t-(n-1)h);h\bigr)
		=
		c_n(h)
		\sum_{P\in\mathcal{P}_{n}}
		\;\prod_{(i,j)\in P}\cos((i-j)h)=1.
		\]
		
	To help illustrate and clarify this general formula, we compute the $h$--$\gamma$ blossom of the
		constant function $G(x)=1$ in the cases $n=2$ and $n=4$.\\
		\noindent\textbf{Case $\boldsymbol{n=2}$.}
	 There is only one  pairing, $
		\mathcal{P}_2 = \bigl\{ (1,2) \bigr\}.
		$
		Substituting this pairing into the general formula gives
		\[
		g_2\bigl((u_1,v_1),(u_2,v_2);h\bigr)
		= c_2(h)\,\bigl(u_1u_2 + v_1v_2\bigr),\,\,\,\,
		c_2(h)
		=
		\left(\cos\bigl((1-2)h\bigr)\right)^{-1}
		= \sec h.
		\]
		\noindent\textbf{Case $\boldsymbol{n=4}$.}
		Now, the set of pairings is
		\[
		\mathcal{P}_4
		=
		\bigl\{
		\{(1,2),(3,4)\},
		\{(1,3),(2,4)\},
		\{(1,4),(2,3)\}
		\bigr\}.
		\]
		Inserting these pairings into (\ref{trigblossom}) yields
		\[
		\begin{aligned}
			g_4\bigl((u_1,v_1),(u_2,v_2),(u_3,v_3),(u_4,v_4);h\bigr)
			&=
			c_4(h)\Big(
			(u_1u_2+v_1v_2)(u_3u_4+v_3v_4)
			\\
			&\qquad
			+(u_1u_3+v_1v_3)(u_2u_4+v_2v_4)
			\\
			&\qquad
			+(u_1u_4+v_1v_4)(u_2u_3+v_2v_3)
			\Big),
		\end{aligned}
		\]
		where
		\[
			c_4(h)
			=\left(\cos^2 h + \cos^2(2h) + \cos(3h)\cos h\right)^{-1}.
		\]
\end{exmp}

In addition to the classical trigonometric case treated in Example~\ref{expblossom1}, similar computations can be carried out for the discrete trigonometric, hyperbolic, and discrete hyperbolic functions. Since these functions satisfy similar identities, analogous formulas for the blossom of the constant function $G(x)=1$ can be derived in exactly the same manner for these functions.

	\begin{exmp}\label{expd}
	For fixed $x$, here we will find the $h$--$\gamma$ blossom of $G(t)=(d(t, x))_{h}^{n}$. Recall that by  (\ref{basefunc}), (\ref{rgenel}), and Lemma \ref{lemmabasis},  $(d(t,x))_h^{n}\in \pi_n(\gamma_{1}, \gamma_{2})$. Hence
	$G(t)=(d(t, x))_{h}^{n}$ has an $h$--$\gamma$ blossom $g\left(\left(u_{1}, v_{1}\right), \ldots,\left(u_{n}, v_{n}\right) ; h\right)$. 
	Moreover
	\begin{align}
		g\left(\left(u_{1}, v_{1}\right), \ldots,\left(u_{n}, v_{n}\right) ; h\right)=\prod_{k=1}^{n}\left( \gamma_{2}(x) u_{k}-\gamma_{1}(x) v_{k}\right)
	\end{align}
	since the right-hand side is symmetric, multilinear, and reduces to
	$G(t)=\left(d(t, x)\right)_{h}^{n}$ along the $h$--$\gamma$ diagonal.
\end{exmp}
\section{Recursive Evaluation Algorithms}\label{section4}

From the definitions of $d$ and $\Gamma$, it follows immediately that
\begin{align*}
	d(u,v) \Gamma(w)+	d(v,w) \Gamma(u)+	d(w,u) \Gamma(v)=(0,0),
\end{align*}
which can be written in the form
\begin{align}\label{uvm-Id}
	\frac{d(u,v)}{d(w,v)} \Gamma(w)+	\frac{d(w,u)}{d(w,v)} \Gamma(v)=	 \Gamma(u).
\end{align}
Based on equation \eqref{uvm-Id}, next we present an algorithm
for computing arbitrary  $h$--$\gamma$ blossom values from a set of special  $h$--$\gamma$ blossom values that will appear later in the dual functional property  (Theorem \ref{dualfuncprop}).

\begin{theorem}\label{recursive}
Let $a,b$ be arbitrary constants such that $d(a-jh, b-ih)\neq 0$ for $0\leq i\leq j\leq n-1$, and let $G \in \pi_{n}\left(\gamma_{1}, \gamma_{2}\right)$ with  $h$--$\gamma$ blossom $g$. Set
	\begin{equation}\label{h-gammablossom}
		Q_{i}^{0}=g(\Gamma(a-i h), \ldots, \Gamma(a-(n-1) h), \Gamma(b), \Gamma(b-h), \ldots, \Gamma(b-(i-1) h) ; h), 
	\end{equation}
	$i=0, \ldots, n$ and define recursively the set of multilinear functions
	\begin{align*}
		Q_{i}^{k+1}(\Gamma(u_1),\ldots,\Gamma(u_{k+1});h)=&\frac{d(u_{k+1},b-ih)}{d(a-(i+k)h,b-ih)} Q_{i}^{k}(\Gamma(u_1),\ldots,\Gamma(u_k);h)\\&+\frac{d(a-(i+k) h,u_{k+1})}{d(a-(i+k)h,b-ih)} Q_{i+1}^{k}(\Gamma(u_1),\ldots,\Gamma(u_k);h)
	\end{align*}
	for $i=0, \ldots, n-k-1$ and $k=0, \ldots, n-1$.
	Then
	\begin{align}\nonumber
		&Q_{i}^{k}(\Gamma(u_1),\ldots,\Gamma(u_k);h)\\ \label{RRalg}&=g\left(\Gamma(a-(k+i) h), \ldots, \Gamma(a-(n-1) h), \Gamma(b), \Gamma(b-h), \ldots, \Gamma(b-(i-1) h),  \Gamma(u_{1}), \ldots,  \Gamma(u_{k}) ; h\right), 
	\end{align}
	$i=0, \ldots, n-k,\,\, k=0, \ldots, n$. In particular,
	$$
	Q_{0}^{n}(\Gamma(u_1),\ldots,\Gamma(u_n);h)=g\left( \Gamma(u_{1}), \ldots,  \Gamma(u_{n}) ; h\right).
	$$
\end{theorem}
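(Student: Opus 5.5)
We argue by induction on $k$, proving \eqref{RRalg} for all $i=0,\ldots,n-k$ at once. For $k=0$ the right-hand side of \eqref{RRalg} is $g(\Gamma(a-ih),\ldots,\Gamma(a-(n-1)h),\Gamma(b),\ldots,\Gamma(b-(i-1)h);h)$ with no $u$-arguments, which is $Q_i^0$ by definition \eqref{h-gammablossom}. So the base case holds trivially.

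For the inductive step, assume \eqref{RRalg} holds for some $k\le n-1$ and all $i=0,\ldots,n-k$, and fix $i\le n-k-1$. The multiset of arguments in the induction hypothesis for $Q_i^k$ is
\[
\{\Gamma(a-(k+i)h),\Gamma(a-(k+i+1)h),\ldots,\Gamma(a-(n-1)h)\}\cup\{\Gamma(b),\ldots,\Gamma(b-(i-1)h)\}\cup\{\Gamma(u_1),\ldots,\Gamma(u_k)\}.
\]
For $Q_{i+1}^k$ the multiset is
\[
\{\Gamma(a-(k+i+1)h),\ldots,\Gamma(a-(n-1)h)\}\cup\{\Gamma(b),\ldots,\Gamma(b-ih)\}\cup\{\Gamma(u_1),\ldots,\Gamma(u_k)\}.
\]
By symmetry of $g$ we may list the arguments in any order. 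The two expressions then share the common $n-1$ arguments
\[
\Gamma(a-(k+i+1)h),\ldots,\Gamma(a-(n-1)h),\ \Gamma(b),\ldots,\Gamma(b-(i-1)h),\ \Gamma(u_1),\ldots,\Gamma(u_k).
\]
They differ in exactly one slot: $\Gamma(w)$ with $w=a-(i+k)h$ for $Q_i^k$, and $\Gamma(v)$ with $v=b-ih$ for $Q_{i+1}^k$.

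Next we apply identity \eqref{uvm-Id} with $u=u_{k+1}$, $w=a-(i+k)h$ and $v=b-ih$:
\[
\Gamma(u_{k+1})=\frac{d(u_{k+1},b-ih)}{d(a-(i+k)h,b-ih)}\Gamma(a-(i+k)h)+\frac{d(a-(i+k)h,u_{k+1})}{d(a-(i+k)h,b-ih)}\Gamma(b-ih).
\]
The denominator is nonzero by hypothesis, since the index pair $(j,i)=(i+k,i)$ satisfies $0\le i\le i+k\le n-1$. Substituting this expression into the distinguished slot of $g$ and using multilinearity in that single argument gives
\[
g(\text{common args},\Gamma(u_{k+1});h)=\frac{d(u_{k+1},b-ih)}{d(a-(i+k)h,b-ih)}Q_i^k+\frac{d(a-(i+k)h,u_{k+1})}{d(a-(i+k)h,b-ih)}Q_{i+1}^k.
\]
This is precisely the recursion defining $Q_i^{k+1}$. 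Its left side is the right-hand side of \eqref{RRalg} at level $k+1$: the $a$-arguments now start at $a-(k+1+i)h$, the $b$-arguments are unchanged, and $u_1,\ldots,u_{k+1}$ are appended. This completes the induction.

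Setting $k=n$, $i=0$ leaves no $a$- or $b$-arguments, so $Q_0^n=g(\Gamma(u_1),\ldots,\Gamma(u_n);h)$. The only delicate point is the index bookkeeping: we must check that the shared arguments match exactly and that the denominators used are covered by the hypothesis $d(a-jh,b-ih)\neq0$. We also note that \eqref{uvm-Id} is a vector identity, so it may be fed into the multilinear slot.
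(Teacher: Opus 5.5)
Your proof is correct and follows the same approach as the paper, whose proof just says the result follows by induction on $k$ from the symmetry and multilinearity of the blossom and \eqref{uvm-Id}. You have carried out that induction explicitly: the index bookkeeping, the single differing slot, and the check that each denominator $d(a-(i+k)h,\,b-ih)$ is covered by the hypothesis with $j=i+k\le n-1$.
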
 
\begin{proof}
 This result follows by induction on $k$ from the symmetry and multilinearity of the  $h$--$\gamma$ blossom and \eqref{uvm-Id}. The case $n=3$ is illustrated in Figure \ref{fig1}.	
 \end{proof}
\begin{center}
	\begin{figure}[h!]
		\centering
		\includegraphics[width=1.02 \linewidth]{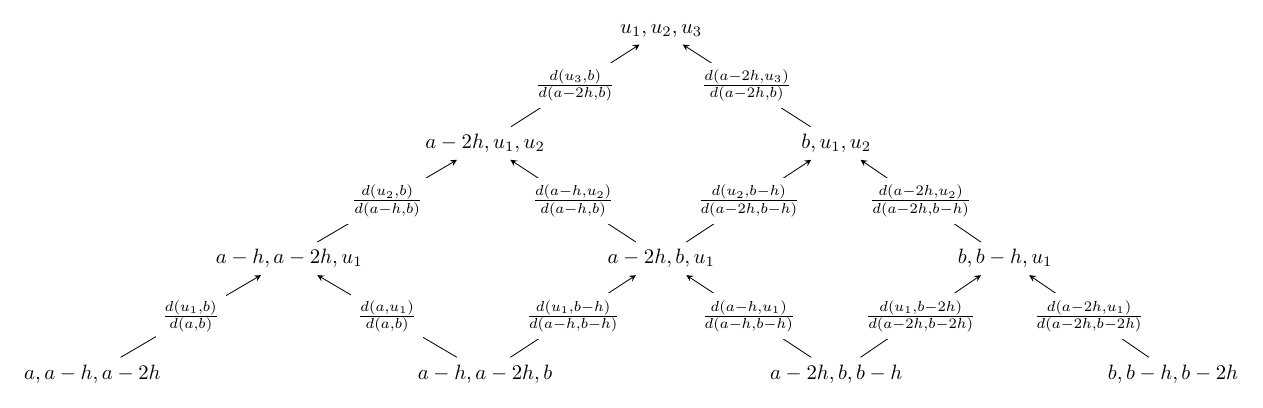}
		\caption{Computing $g(\Gamma(u_1),\ldots,\Gamma(u_n);h)$ from the initial $h$--$\,\gamma$ blossom values $ Q^0_i$, $i = 0,\ldots ,n$. Here we illustrate the case $n=3$ and we use the notation $u,v,w$ to represent the blossom value $g(\Gamma(u), \Gamma(v), \Gamma(w);h)$.}
		\label{fig1}
	\end{figure}
\end{center}
\begin{theorem}[Recursive Evaluation Algorithm]\label{recursivex}
	Let $G \in \pi_{n}\left(\gamma_{1}, \gamma_{2}\right)$ and let $g\left(\left(u_{1}, v_{1}\right), \ldots,\left(u_{n}, v_{n}\right); h\right)$ be the  $h$--$\gamma$ blossom of $G(x)$. For \(h\neq 0\), there are $n!$ recursive evaluation algorithms for $G(x)$ defined as follows: Let $a,b$ be arbitrary constants such that $d(a-jh, b-ih)\neq 0$ for $0\leq i\leq j\leq n-1$, and let  $\sigma$ be a permutation of $\{1, \ldots, n\}$. Set
	
	$$
	P_{i}^{0}=g(\Gamma(a-i h), \ldots,\Gamma( a-(n-1) h), \Gamma(b), \Gamma(b-h), \ldots, \Gamma(b-(i-1) h) ; h), \quad i=0, \ldots, n
	$$
	and define recursively
	\begin{align*}
		P_{i}^{k+1}(x)=\frac{d(x-(\sigma(k+1)-1
			)h,b-ih)}{d(a-(i+k)h,b-ih)} P_{i}^{k}(x)+\frac{d(a-(i+k) h,x-(\sigma(k+1)-1
			)h)}{d(a-(i+k)h,b-ih)} P_{i+1}^{k}(x) 
	\end{align*}
	for $i=0, \ldots, n-k-1$ and $k=0, \ldots, n-1$. 
	Then
	\begin{align}\nonumber
		P_{i}^{k}(x)=\, & g(\Gamma(a-(k+i) h), \ldots, \Gamma(a-(n-1) h), \Gamma(b), \Gamma(b-h), \ldots,  \Gamma(b-(i-1) h),\\ \label{Pikx-sig}
		& \Gamma(x-(\sigma(1)-1) h), \ldots,\Gamma( x-(\sigma(k)-1) h) ; h),
	\end{align}
	$i=0, \ldots, n-k, k=0, \ldots, n$. In particular,
	
	\begin{equation*}
		P_{0}^{n}(x)=g(\Gamma(x-(\sigma(1)-1) h), \ldots, \Gamma(x-(\sigma(n)-1) h ); h)=G(x) . 
	\end{equation*}
\end{theorem}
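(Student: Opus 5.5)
The plan is to obtain Theorem~\ref{recursivex} as a specialization of Theorem~\ref{recursive}. Fix a permutation $\sigma$ of $\{1,\ldots,n\}$ and set $u_m = x-(\sigma(m)-1)h$ for $m=1,\ldots,n$. The initial values $P_i^0$ coincide with $Q_i^0$ in \eqref{h-gammablossom}. Substituting $u_{k+1}=x-(\sigma(k+1)-1)h$ into the recurrence defining $Q_i^{k+1}$ reproduces exactly the recurrence defining $P_i^{k+1}(x)$, because the coefficients of $Q_i^k$ and $Q_{i+1}^k$ become the stated ratios of $d$'s. Induction on $k$ then gives
\[
P_i^k(x)=Q_i^k(\Gamma(u_1),\ldots,\Gamma(u_k);h),
\]
and \eqref{RRalg} yields \eqref{Pikx-sig} at once. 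The hypothesis $d(a-jh,b-ih)\neq 0$ for $0\le i\le j\le n-1$ is the same as in Theorem~\ref{recursive}. The denominators occurring are $d(a-(i+k)h,b-ih)$ with $0\le i\le i+k\le n-1$, so they never vanish.

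If one wants the argument to be self-contained rather than quoting Theorem~\ref{recursive}, I would run the induction on $k$ directly. The key tool is identity \eqref{uvm-Id} with $w=a-(i+k)h$, $v=b-ih$, $u=u_{k+1}$:
\[
\Gamma(u_{k+1})=\frac{d(u_{k+1},b-ih)}{d(a-(i+k)h,b-ih)}\,\Gamma(a-(i+k)h)+\frac{d(a-(i+k)h,u_{k+1})}{d(a-(i+k)h,b-ih)}\,\Gamma(b-ih).
\]
Start from the blossom expression \eqref{Pikx-sig} for $P_i^{k+1}(x)$. Use symmetry to isolate the argument $\Gamma(u_{k+1})$, and replace it by the right-hand side above. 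Multilinearity then splits the value into two blossom values. The first has argument list $\Gamma(a-(i+k)h),\ldots,\Gamma(a-(n-1)h)$, $\Gamma(b),\ldots,\Gamma(b-(i-1)h)$, $\Gamma(u_1),\ldots,\Gamma(u_k)$, which is $P_i^k(x)$. The second has argument list $\Gamma(a-(i+k+1)h),\ldots,\Gamma(a-(n-1)h)$, $\Gamma(b),\ldots,\Gamma(b-ih)$, $\Gamma(u_1),\ldots,\Gamma(u_k)$, which is $P_{i+1}^k(x)$. The main care point is this index bookkeeping: when $\Gamma(a-(i+k)h)$ is supplied it fills the $a$-block of $P_i^k$, and when $\Gamma(b-ih)$ is supplied it extends the $b$-block to length $i+1$ for $P_{i+1}^k$. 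The counts check out, since $(n-i-k)+i+k=n$ arguments in each case.

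For the final claim, take $k=n$, $i=0$. All $a$- and $b$-blocks are then empty, so
\[
P_0^n(x)=g(\Gamma(x-(\sigma(1)-1)h),\ldots,\Gamma(x-(\sigma(n)-1)h);h).
\]
By symmetry this equals $g(\Gamma(x),\Gamma(x-h),\ldots,\Gamma(x-(n-1)h);h)$, which is $G(x)$ by the $h$--$\gamma$ diagonal property.

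It remains to justify the count of $n!$ algorithms. Each $\sigma$ gives a valid algorithm, and when $h\neq 0$ the $n$ nodes $x-(m-1)h$ are distinct. The order in which they are inserted therefore changes the intermediate functions $P_i^k$ for $1\le k\le n-1$, so distinct permutations give distinct recursive schemes. I expect no real obstacle here; the only delicate part is the index bookkeeping described above.
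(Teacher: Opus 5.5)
Your proposal is correct and follows the paper's route: the paper proves this theorem by substituting $u_m=x-(\sigma(m)-1)h$ into Theorem~\ref{recursive}, whose proof is the induction on $k$ via symmetry, multilinearity and \eqref{uvm-Id} that you write out, and your index bookkeeping and denominator check are right. The only soft spot is your extra claim that distinct $\sigma$ always give distinct intermediate functions, which the paper does not assert and which can fail, e.g.\ for $\Gamma=(\cos,\sin)$ with $h=2\pi$, where distinct nodes have equal $\Gamma$-values and every $\sigma$ yields the same scheme; the $n!$ in the statement should be read simply as one algorithm per permutation.
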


\begin{proof}
	The result follows from Theorem \ref{recursive} by substituting the specific values for the  $h$--$\gamma$ blossom parameters: $u_{i}=x-(\sigma(i)-1) h,\, i=1, \ldots, n$. The case $n=3$ for $\sigma_{i}=i$ is illustrated in Figure \ref{fig2}.
\end{proof}
\begin{center}
	\begin{figure}[h!]
		\centering
		\includegraphics[width=1.02 \linewidth]{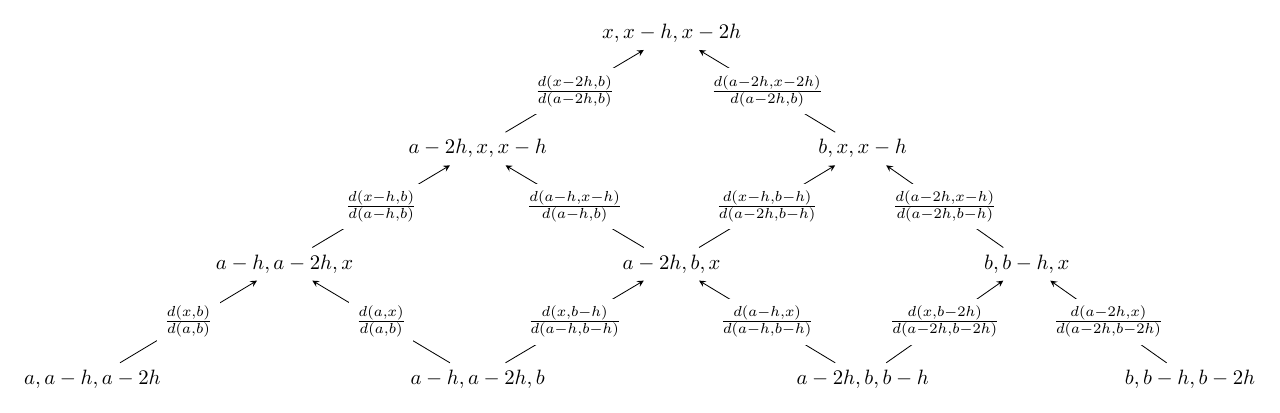}
		\caption{Recursive evaluation algorithm for $G(x)$ for $\sigma(i)=i$. Here we illustrate the case $n=3$ and we use the notation $u,v,w$ to represent the blossom value $g(\Gamma(u), \Gamma(v), \Gamma(w);h)$. }
		\label{fig2}
	\end{figure}
\end{center}
\section{Bernstein Basis Functions} \label{section5}
In this section, the notation $[a, b]$ denotes the interval determined by the endpoints $a$ and $b$, regardless of their order. Thus, $x \in [a, b]$ means that $x$ lies between $a$ and $b$, i.e., $\min\{a, b\} \le x \le \max\{a, b\}$.
We are now going to define an $h$--$\gamma$ version of Bernstein basis functions and Bernstein B\'{e}zier curves for the space $\pi_{n}\left(\gamma_{1}, \gamma_{2}\right)$ over the interval $[a,b]$. In the recursive evaluation algorithm, we start from the base row consisting of control points $P^0_k$, $k=0,\ldots,n$. 
At each stage, the values are computed by recursive interpolation, so that every intermediate point $P^m_j(x)$ is expressed 
as a linear combination of the control points $P^0_k$. When we reach the apex, the function value $G(x)$ emerges as a linear 
combination of the control points with certain coefficient functions. These coefficient functions, attached to the base control 
points $P^0_k$, are by definition the $h$--$\gamma$ Bernstein basis functions which we will denote by $
B^n_k(x,[a,b];\gamma,h)
$. 
 \begin{remark}
 	Let $G \in \pi_{n}(\gamma_{1},\gamma_{2})$ with  $h$--$\gamma$ blossom $g$. By Theorem~\ref{recursivex}, there exist $n!$ recursive evaluation algorithms parameterized by permutations $\sigma$ of $\{1,\ldots,n\}$. Each choice of $\sigma$ specifies a different order for inserting the arguments 
 $
 	x, \; x-h, \; \ldots, \; x-(n-1)h$
 	into the blossom. Although every such algorithm evaluates to the same function value $G(x)$ at the apex, the coefficient functions attached to the initial control points vary with the choice of permutation, yielding different families of  $h$--$\gamma$ Bernstein basis functions.
 	
 	Figures \ref{Fig4} and \ref{Fig5} display the recursive evaluation algorithm for $n=2$ with different choices of the permutation $\sigma$. Figure~\ref{Fig4} corresponds to the identity permutation $\sigma(i)=i$, while Figure~\ref{Fig5} corresponds to the reverse ordering $\sigma(i)=n+1-i$. Both diagrams give rise to valid recursive evaluation algorithms and thus to distinct representations of the same function $G(x)$. But the explicit formulas for the $h$--$\gamma$ Bernstein basis functions may differ.  
 	Equations~\eqref{bbernstein1h} and~\eqref{bbernstein1h2} show this difference: the expressions for $B^2_{0}(x,[a,b];\gamma,h) $ and $B^2_{2}(x,[a,b];\gamma,h) $ are identical, 
 	while the middle basis functions $B^2_{1}(x,[a,b];\gamma,h) $ differ depending on $\sigma$. 
 	To highlight this effect, we have written the altered terms in bold. 
 	\begin{figure}[!htb]
 		\begin{minipage}{0.49\textwidth}
 			\centering
 			\includegraphics[width=1\linewidth]{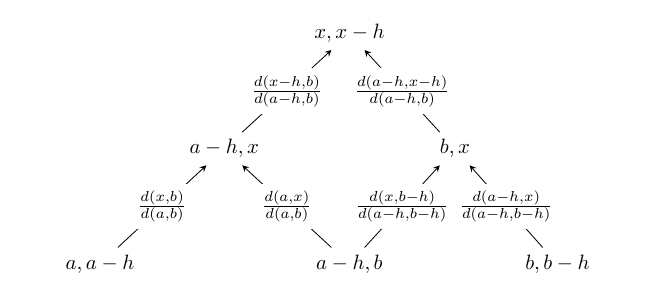}
 			\caption{Recursive evaluation algorithm for $G(x)$ with $\sigma(i)=i$. Here we illustrate the case $n=2$ and we use the notation $u,v$ to represent the blossom value $g(\Gamma(u), \Gamma(v);h)$. }\label{Fig4}
 		\end{minipage}\hfill
 		\begin{minipage}{0.49\textwidth}
 			\centering
 			\includegraphics[width=1\linewidth]{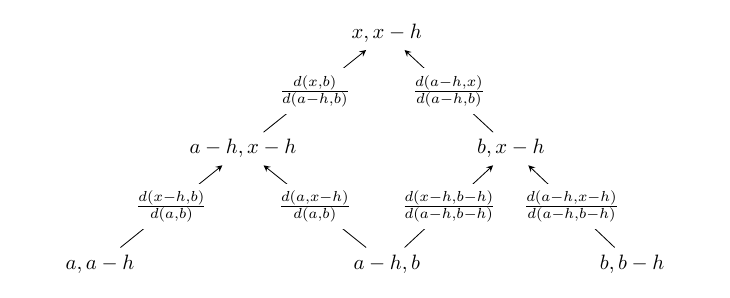}
 			\caption{Recursive evaluation algorithm for $G(x)$ with $\sigma(i)=n+1-i$. Here we illustrate the case $n=2$ and we use the notation $u,v$ to represent the blossom value $g(\Gamma(u), \Gamma(v);h)$.}\label{Fig5}
 		\end{minipage}
 	\end{figure}
 	For  $\sigma(i)=i$ and $n=2$, from Figure \ref{Fig4} we get
 	\begin{equation}\label{bbernstein1h}
 		\begin{aligned}
 			& B_{0}^{2}(x,[a,b];\gamma,h) =\frac{d(x, b) d(x-h, b)}{d(a, b) d(a-h, b)}, \\
 			& B_{1}^{2}(x,[a,b];\gamma,h) =\frac{\boldsymbol{d(a, x)d(x-h, b)}}{d(a, b) d(a-h, b)} +\frac{\boldsymbol{d(x, b-h)d(a-h, x-h)}}{d(a-h, b-h)d(a-h, b)},\\
 			& B_{2}^{2}(x,[a,b];\gamma,h) =\frac{ d(a-h, x)d(a-h, x-h)}{d(a-h, b-h) d(a-h, b)},\end{aligned}
 	\end{equation}
 	whereas for $\sigma(i)=n+1-i$ and $n=2$, from Figure \ref{Fig5}, we get
 	\begin{equation}\label{bbernstein1h2}
 		\begin{aligned}
 			& B_{0}^{2}(x,[a,b];\gamma,h) =\frac{d(x-h, b)d(x, b) }{d(a, b) d(a-h, b)}, \\
 			& B_{1}^{2}(x,[a,b];\gamma,h) =\frac{\boldsymbol{d(a, x-h)d(x, b)}}{d(a, b) d(a-h, b)} +\frac{\boldsymbol{d(x-h, b-h) d(a-h, x)}}{d(a-h, b-h)d(a-h, b)},\\
 			& B_{2}^{2}(x,[a,b];\gamma,h) =\frac{d(a-h, x-h) d(a-h, x)}{d(a-h, b-h) d(a-h, b)}.\end{aligned}
 	\end{equation}
 We will use the natural choice $\sigma(i)=i$, corresponding to inserting the arguments in the order $x, x-h, \dots, x-(n-1)h$. With this ordering, the recursive evaluation algorithm (see Figure \ref{fig2}) becomes straightforward, and more importantly, provides a subdivision algorithm directly analogous to the classical de Casteljau subdivision algorithm for B{\'e}zier curves (see Theorem \ref{subdivision}). Hence, fixing $\sigma(i)=i$ allows us to develop a convenient recursive structure for the $h$--$\gamma$ Bernstein basis functions and the associated subdivision algorithm.

 \end{remark}
\begin{theorem}\label{theorem:basis}
The  $h$--$\gamma$ Bernstein basis functions  $B^{\,n}_k(x,[a,b];\gamma,h)$  satisfy the following recurrence:
	\begin{align}\nonumber
	\,B_{0}^{0}(x,[a,b] ;\gamma, h)=1,\\ \nonumber
	B_{k}^{n}(x,[a,b] ;\gamma, h)=\,&\frac{d(a-(k-1)h,x)}{d(a-(k-1)h,b-(k-1)h)}\, B_{k-1}^{n-1}(x-h,[a-h,b] ;\gamma, h)\\&+\frac{d(x,b-kh)}{d(a-kh,b-kh)} B_{k}^{n-1}(x-h,[a-h,b] ;\gamma, h)\label{recurrenceBB}
\end{align}
$k=0, \ldots, n$, where $B_{-1}^{n-1}(x,[a,b] ;\gamma, h)=0$ and $B_{n}^{n-1}(x,[a,b] ; \gamma, h)=0$.
\end{theorem}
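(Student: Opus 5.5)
The plan is to read the recurrence off the structure of the recursive evaluation algorithm of Theorem \ref{recursivex} with $\sigma(i)=i$. The basis functions $B^n_k(x,[a,b];\gamma,h)$ are, by definition, the coefficients of $P^0_k$ in the apex value $P^n_0(x)$. In that triangle, the first insertion (level $k=0\to 1$) uses the argument $x$, and the remaining $n-1$ levels insert $x-h,\dots,x-(n-1)h$. I will therefore split the diagram at the first level rather than the last. The alternative, splitting at the apex, would make the arguments $x,\dots,x-(n-2)h$ appear in a subtriangle with the final insertion $x-(n-1)h$ on top, and would not give the shift $x\mapsto x-h$, $a\mapsto a-h$ in the statement.

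\textbf{Step 1: identify the upper subtriangle.} Consider the nodes $P^1_i(x)$, $i=0,\dots,n-1$, as a new base row. By (\ref{Pikx-sig}), $P^1_i(x)=g(\Gamma(a-(i+1)h),\dots,\Gamma(a-(n-1)h),\Gamma(b),\dots,\Gamma(b-(i-1)h),\Gamma(x);h)$. Fixing the last argument $\Gamma(x)$ gives a symmetric multilinear function of $n-1$ arguments. With $a'=a-h$, $b'=b$ and $n'=n-1$, its initial values are exactly of the form (\ref{h-gammablossom}): $\Gamma(a'-ih),\dots,\Gamma(a'-(n'-1)h),\Gamma(b'),\dots,\Gamma(b'-(i-1)h)$.

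Next I compare the recurrence coefficients at level $k\ge1$ of the original algorithm with those at level $k-1$ of the new one. The original coefficients are $d(x-kh,b-ih)/d(a-(i+k)h,b-ih)$ and its companion. The new ones are $d(x'-(k-1)h,b-ih)/d(a'-(i+k-1)h,b-ih)$ with $x'=x-h$. These agree. The nonvanishing hypothesis on $d$ is inherited as well.

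Since the recursion is linear in the base row, and $B^{n-1}_j$ is defined purely from the coefficients (not from $g$), the apex satisfies
$$P^n_0(x)=\sum_{i=0}^{n-1}B^{n-1}_i(x-h,[a-h,b];\gamma,h)\,P^1_i(x).$$
Rigorously, I would phrase this as an induction on $n$ showing that the coefficient functions of the triangle depend only on $(x,a,b)$ through the $d$-ratios. The start is the trivial base $B^0_0=1$.

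\textbf{Step 2: expand the first level.} The $k=0$ step gives
$$P^1_i=\frac{d(x,b-ih)}{d(a-ih,b-ih)}P^0_i+\frac{d(a-ih,x)}{d(a-ih,b-ih)}P^0_{i+1}.$$
Substituting this and collecting the coefficient of $P^0_k$ uses two contributions. From $i=k$, the first term gives $\frac{d(x,b-kh)}{d(a-kh,b-kh)}B^{n-1}_k(x-h,[a-h,b])$. From $i=k-1$, the second term gives $\frac{d(a-(k-1)h,x)}{d(a-(k-1)h,b-(k-1)h)}B^{n-1}_{k-1}(x-h,[a-h,b])$. These are exactly (\ref{recurrenceBB}), and the boundary conventions $B^{n-1}_{-1}=B^{n-1}_n=0$ handle $k=0$ and $k=n$.

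\textbf{Main obstacle.} The only real subtlety is the bookkeeping in Step 1. I must check carefully that the index shift in the denominators, $a-(i+k)h=(a-h)-(i+k-1)h$, matches, and that the numerators involve $b-ih$ with $b$ unchanged. I also need to make precise that "coefficient of $P^0_k$" is well defined, i.e. that the coefficients are defined as formal coefficients of the triangle rather than extracted from $G$. I would verify the matching against the $n=2$ formulas (\ref{bbernstein1h}) as a sanity check.
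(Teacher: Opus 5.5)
Your proposal is correct and is essentially the paper's proof: induction on $n$, splitting the $\sigma(i)=i$ triangle at the first level, treating the upper subtriangle as the order-$(n-1)$ algorithm on $[a-h,b]$ evaluated at $x-h$, and collecting the two edge weights $\frac{d(x,b-kh)}{d(a-kh,b-kh)}$ and $\frac{d(a-(k-1)h,x)}{d(a-(k-1)h,b-(k-1)h)}$ into $P^0_k$. Your explicit checks that $(a-h)-(i+k-1)h=a-(i+k)h$ and $(x-h)-(k-1)h=x-kh$, so the coefficients match, that the nonvanishing hypothesis is inherited, and that $B^{n-1}_j$ are formal triangle coefficients, spell out exactly what the paper compresses into ``using the induction hypothesis on $[a-h,b]$ at $x-h$.''
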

\begin{proof}	The proof is by induction on $n$. By Theorem \ref{recursivex} for the permutation $\sigma(i)=i$ and $n=1$, we have
	\begin{equation}\label{eq:thm6-n1}
		P^{1}_0(x)
		=\frac{d(x,b)}{d(a,b)}\,P^0_0
		\;+\;
		\frac{d(a,x)}{d(a,b)}\,P^0_1 .
	\end{equation}
Therefore by definition, the coefficients of $P^0_0$ and $P^0_1$ in (\ref{eq:thm6-n1})
	are the basis functions of order 1:
	\begin{align*}
		B^{1}_{0}(x,[a,b];\gamma,h) = \frac{d(x,b)}{d(a,b)},\quad
		B^{1}_{1}(x,[a,b];\gamma,h) = \frac{d(a,x)}{d(a,b)},
	\end{align*}
which is  (\ref{recurrenceBB}) for $n=1$. Suppose that (\ref{recurrenceBB}) holds for order $n-1$  on the interval $[a-h,b]$. We will prove that  (\ref{recurrenceBB}) holds for order $n$. Again consider the recursive evaluation algorithm for the permutation $\sigma(i)=i$. In this scheme every node at level $k+1$ is formed from two nodes at level $k$ by interpolation with weights expressed using $d(\cdot,\cdot)$. Placing $1$ at the apex and propagating downward, the value attached to each base control point $P^0_k$
is precisely the basis function $B^{\,n}_k\!\left(x,[a,b];\gamma,h\right)$. By Theorem (\ref{recursivex}), for a fixed $k$, the point $P^0_k$ contributes to two nodes at level one:
\begin{align}
	P^{1}_i(x)
	&=\underbrace{\frac{d(x,\,b-ih)}{d(a-ih,\,b-ih)}}_{\displaystyle \alpha_i(x)}\,P^0_i
	\;+\;
	\underbrace{\frac{d(a-ih,\,x)}{d(a-ih,\,b-ih)}}_{\displaystyle \beta_{i}(x)}\,P^0_{i+1}, \quad i=k-1, k. \label{eq:right-edge}
	\end{align}
\noindent Hence the coefficients of $P^0_k$ on the two incident edges are $\alpha_{k}(x)$ and $\beta_{k-1}(x)$  in \eqref{eq:right-edge}. Using the induction hypothesis on $[a-h,b]$ at $x-h$, we obtain, for $n\ge1$ and $0\le k\le n$,
\begin{equation}\label{eq:hB-rec}
	B_k^{\,n}(x,[a,b];\gamma,h)
	=\beta_{k-1}(x)\,B_{k-1}^{\,n-1}(x-h,[a-h,b];\gamma,h)
	+\alpha_{k}(x)\,B_{k}^{\,n-1}(x-h,[a-h,b];\gamma,h),
\end{equation}
which gives the result. For an illustration of the case $n=3$, see Figure	\ref{fig3}.
\end{proof}
As in the classical case, when we put $1$ at the apex in the diagram of the recursive evaluation algorithm in Figure \ref{fig2}, the values in the triangle propagate downwards by the same recursive interpolation, and what we get at the base  are exactly the $h$-Bernstein basis functions, as depicted in Figure \ref{fig3}.
\begin{center}
	\begin{figure}[ht!]
		\centering
		\includegraphics[width=1.01 \linewidth]{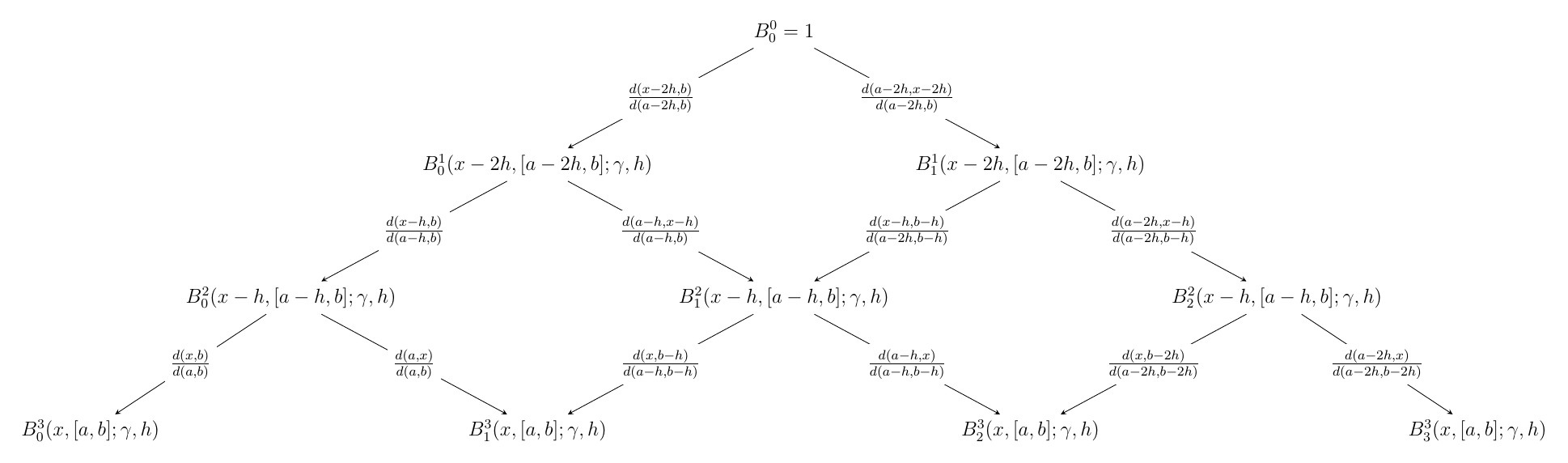}
		\caption{ Placing $1$ at the apex and propagating down yields the
			$h$--$\gamma$ Bernstein basis functions at the base.}
		\label{fig3}
	\end{figure}
\end{center}
\begin{theorem} 
	The $h$--$\gamma$ Bernstein basis functions are \textbf{shift-invariant}: For all $\Delta$ such that $C(\Delta)\neq 0$,
	\begin{align}\label{tinvariance}
		B_{k}^{n}(t+\Delta ;[a+\Delta, b+\Delta] ; \gamma, h)=B_{k}^{n}(t ;[a, b] ; \gamma, h).
	\end{align}
\end{theorem}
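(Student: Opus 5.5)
The plan is to prove shift-invariance by induction on $n$, using the recurrence \eqref{recurrenceBB} of Theorem~\ref{theorem:basis}. The one ingredient not yet in the paper is a shift identity for $d$:
\[
d(u+\Delta, w+\Delta)=\det C(-\Delta)\, d(u,w),
\]
where $C(-\Delta)$ is the translation matrix with $\Gamma(x+\Delta)=C(-\Delta)\Gamma(x)$. Every coefficient in the recurrence is a ratio of two $d$'s. Shifting all arguments by $\Delta$ therefore multiplies numerator and denominator by the same nonzero constant $\det C(-\Delta)$, and each coefficient is unchanged. The hypothesis ``$C(\Delta)\neq 0$'' I will read as nondegeneracy (invertibility) of the translation matrix. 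Since $C(-\Delta)C(\Delta)=I$, invertibility of one gives invertibility of the other.

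\textbf{Step 1 (the $d$ identity).} Write $d(u,w)=\det\bigl(\Gamma(u)\ \Gamma(w)\bigr)$, the $2\times 2$ determinant with columns $\Gamma(u)$ and $\Gamma(w)$. Translation invariance with shift $-\Delta$ gives
\[
\bigl(\Gamma(u+\Delta)\ \Gamma(w+\Delta)\bigr)=C(-\Delta)\bigl(\Gamma(u)\ \Gamma(w)\bigr).
\]
Taking determinants yields the identity. This also shows that the nonvanishing hypotheses $d(a-jh,b-ih)\neq 0$ transfer from $[a,b]$ to $[a+\Delta,b+\Delta]$, so the shifted basis is well defined.

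\textbf{Step 2 (base case).} For $n=0$ both sides equal $1$. As a sanity check, for $n=1$ we have $B_0^1=d(t,b)/d(a,b)$ and $B_1^1=d(a,t)/d(a,b)$, and Step 1 shows directly that these are invariant.

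\textbf{Step 3 (inductive step).} Assume \eqref{tinvariance} holds at order $n-1$ for all intervals and all $t$. Apply \eqref{recurrenceBB} to $B_k^n(t+\Delta,[a+\Delta,b+\Delta];\gamma,h)$. The first coefficient becomes
\[
\frac{d(a+\Delta-(k-1)h,\,t+\Delta)}{d(a+\Delta-(k-1)h,\,b+\Delta-(k-1)h)},
\]
which equals the unshifted coefficient by Step 1. The second coefficient is handled the same way. The lower-order factors are $B_{k-1}^{n-1}\bigl((t-h)+\Delta,[(a-h)+\Delta,b+\Delta];\gamma,h\bigr)$ and its $B_k^{n-1}$ analogue. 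By the induction hypothesis, applied on the interval $[a-h,b]$ at the point $t-h$, these equal the unshifted $B^{n-1}$'s. Summing the two terms recovers the recurrence for $B_k^n(t,[a,b];\gamma,h)$. The boundary conventions $B_{-1}^{n-1}=B_n^{n-1}=0$ are trivially shift-invariant.

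\textbf{Main obstacle.} The only subtle point is that the induction hypothesis must hold for arbitrary intervals, because the recurrence passes to $[a-h,b]$. The hypothesis should therefore be stated as ``for all $a$, $b$, $t$''. One must also check that the admissibility conditions on the endpoints are preserved under the shift, which Step 1 provides.
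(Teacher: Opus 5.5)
Your proposal is correct and follows essentially the same route as the paper: both combine the determinant identity for $d$ under translation with the recurrence of Theorem~\ref{theorem:basis}, so a common nonzero factor cancels in every ratio. The paper states the identity in the equivalent form $d(u-\Delta,v-\Delta)=\det C(\Delta)\,d(u,v)$, where you use $d(u+\Delta,w+\Delta)=\det C(-\Delta)\,d(u,w)$; your explicit induction over all intervals (applied on $[a-h,b]$ at the point $t-h$) and your check that the admissibility conditions survive the shift simply spell out what the paper's one-line ``cancelling'' argument leaves implicit.
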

\begin{proof}
	Notice that $d(u,v)= \gamma_{1}(u) \gamma_{2}(v)-\gamma_{2}(u) \gamma_{1}(v)= \det\!\big[\,\big(\gamma_1(u),\gamma_2(u)\big)^T\;
	\big(\gamma_1(v),\gamma_2(v)\big)^T\,\big]$.  Hence by translation invariance
	\begin{align}\nonumber
		d(u-\Delta,v-\Delta) 
		&= \det\!\Big[\,\big(\gamma_1(u-\Delta),\gamma_2(u-\Delta)\big)^T\;
		\big(\gamma_1(v-\Delta),\gamma_2(v-\Delta)\big)^T\,\Big] \\ \nonumber
		&= \det\!\Big[\,C(\Delta)\big(\gamma_1(u),\gamma_2(u)\big)^T\;
		C(\Delta)\big(\gamma_1(v),\gamma_2(v)\big)^T\,\Big] \\ \nonumber
		&= \det(C(\Delta))\,
		\det\!\Big[\,\big(\gamma_1(u),\gamma_2(u)\big)^T\;
		\big(\gamma_1(v),\gamma_2(v)\big)^T\,\Big] \\ \label{dtrans}
		&= \det(C(\Delta))\,d(u,v).
	\end{align}
	By  (\ref{dtrans}) and Theorem \ref{theorem:basis}, the same constant  $\det(C(\Delta))$ multiplies every $d(.,.)$ for a fixed shift $\Delta$ and appears in every numerator and denominator in the recursive
	evaluation algorithm \eqref{recurrenceBB} for $B_{k}^{n}(t+\Delta ;[a+\Delta, b+\Delta] ; \gamma, h)$. Cancelling  $\det(C(\Delta))$ yields $B_{k}^{n}(t ;[a, b] ; \gamma, h)$.
\end{proof}

For  special choices of $\gamma_1$ and $\gamma_{2}$,  we get known $h$--$\gamma$ Bernstein basis functions. For  $\gamma_1=1$ and $\gamma_{2}=x$,  $\pi_{n}(\gamma_{1},\gamma_{2})$ is the space of polynomials of degree $n$
and $d(u, v) = v-u$. Thus \cite{ron1}
\begin{align}\label{hbernsteinbasis}
	B_{k}^{n}(x,[a,b]; h)=\binom{n}{k} \frac{\prod_{j=0}^{k-1}(x-a + jh)\prod_{j=0}^{n-k-1} (b-x+jh)}{\prod_{j=0}^{n-1}(b-a+jh)}.
\end{align}
 For $\gamma_1(x)=\cos x$, $\gamma_2(x)=\sin x$, we have $
d(u,v)= \sin(v - u)$. Therefore, by \eqref{bbernstein1h}, the explicit formulas for
the $h$--$\gamma$ Bernstein basis functions for the space $\pi_{2}(\cos(x), \sin(x))$ are
\begin{align*}
	B^2_0(x,[a,b];\gamma,h)
	&=\frac{\sin(b-x)\,\sin(b-x+h)}
	{\sin(b-a)\,\sin(b-a+h)},\\
	B^2_1(x,[a,b];\gamma,h)
	&=\frac{\sin(x-a)}{\sin(b-a)}
	\left(
	\frac{\sin(b-x+h)}{\sin(b-a+h)}
	+\frac{\sin(b-h-x)}{\sin(b-a+h)}
	\right),\\
	B^2_2(x,[a,b];\gamma,h)
	&=\frac{\sin(x-a)\,\sin(x-a+h)}
	{\sin(b-a)\,\sin(b-a+h)}.
\end{align*}
As $h \to 0$, each basis function tends to the corresponding classical trigonometric Bernstein basis function introduced in \cite{gonsor}. 

In the special case $h=0$, the $h$--$\gamma$ Bernstein basis functions reduce to the
$\gamma$--Bernstein basis functions for the space $\pi_{n}\left(\gamma_{1},\gamma_{2}\right)$ derived in \cite{cetin2}:
\begin{align}\label{gammaber}
	B_k^n(x,[a,b]; \gamma) =
	\binom{n}{k}
	\left( \frac{d(a,x)}{d(a,b)} \right)^k
	\left( \frac{d(x,b)}{d(a,b)} \right)^{\,n-k}.
\end{align}

\begin{definition}
	Let $\Gamma(t)=\left(\gamma_{1}(t), \gamma_{2}(t)\right),\, t\in[a,b]$, be a planar curve, where $d(a-jh, b-ih)\neq 0$ for $0\leq i\leq j\leq n-1$. The $h$--$\gamma$ B\'{e}zier curves of order $n$ on the planar parametric domain $\Gamma(t)$  are defined by
	\begin{align}
		G(x)=\sum_{k=0}^{n}b_kB_{k}^{n}(x,[a,b] ; \gamma, h), \quad x\in[a,b],
	\end{align}
where $b_k$, $k=0,\ldots, n$ are called the \textit{control points} of the $h$--$\gamma$ B\'{e}zier curve $G(x)$.
\end{definition}

\begin{theorem}\label{dual}
	 Every curve $G\in \pi_{n}(\gamma_{1},\gamma_{2})$ is an $h$--$\gamma$ B\'{e}zier curve of the form
	 \begin{align}\label{dualeqn}
	 G(x)=\sum_{k=0}^{n}  g(\Gamma(a-k h), \ldots,\Gamma( a-(n-1) h), \Gamma(b), \Gamma(b-h), \ldots, \Gamma(b-(k-1) h) ; h)B_{k}^{n}(x,[a,b];\gamma,h),
	 \end{align}
	where $g$ is the $h$--$\gamma$  blossom of $G$.\end{theorem}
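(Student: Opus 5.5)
The plan is to read the statement directly off the recursive evaluation algorithm of Theorem \ref{recursivex} with $\sigma(i)=i$, using the same linear-algebra observation that defined the $h$--$\gamma$ Bernstein basis functions. For $G\in\pi_n(\gamma_1,\gamma_2)$ with $h$--$\gamma$ blossom $g$ (which exists and is unique by Theorem \ref{theorem:blossom}), initialize the algorithm with
\[
P_k^0=g(\Gamma(a-kh),\ldots,\Gamma(a-(n-1)h),\Gamma(b),\ldots,\Gamma(b-(k-1)h);h),\quad k=0,\ldots,n.
\]
Theorem \ref{recursivex} then guarantees that the apex is $P_0^n(x)=g(\Gamma(x),\Gamma(x-h),\ldots,\Gamma(x-(n-1)h);h)=G(x)$, where the last equality is the $h$--$\gamma$ diagonal property.

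The key step is to show that the apex is a linear combination $\sum_k P_k^0\,B_k^n(x,[a,b];\gamma,h)$. The coefficient functions must be independent of $G$, and they must coincide with the functions defined in Section \ref{section5} and characterized by the recurrence \eqref{recurrenceBB}.

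To see this, observe that each step of the algorithm forms a new node as a combination of two nodes from the previous level. The weights are ratios of values of $d$ that depend only on $x,a,b,h,i,k$, and never on the node values. Hence, by induction on the level $k$, every node $P_i^k(x)$ is a linear combination of the base values $P_0^0,\ldots,P_n^0$ with coefficients independent of $G$. At the apex, the coefficient of $P_k^0$ is the sum over all downward paths from the apex to $P_k^0$ of the products of the edge weights. By the definition in Section \ref{section5}, this is exactly $B_k^n(x,[a,b];\gamma,h)$, and Theorem \ref{theorem:basis} confirms that these path sums satisfy \eqref{recurrenceBB}. Substituting $P_k^0$ back in gives \eqref{dualeqn}.

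The only point needing care, and the one I expect to be the main obstacle, is consistency of the basis functions. Their definition and their recurrence are phrased for the subtriangle on $[a-h,b]$ evaluated at $x-h$, so I must check that the subtriangle rooted at level one has the same structure. This means verifying that the weights there are those of the order-$(n-1)$ algorithm on $[a-h,b]$ with inserted parameters $x-h,\ldots,x-(n-1)h$. The indices $a-(i+k)h$ and $b-ih$ match after the shift $a\mapsto a-h$, so the identification is an index bookkeeping exercise, done once by induction on $n$.

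The nondegeneracy hypothesis $d(a-jh,b-ih)\neq0$ for $0\le i\le j\le n-1$ ensures that all denominators in the algorithm are nonzero. Finally, the representation holds for every $G$ in the $(n+1)$-dimensional space $\pi_n(\gamma_1,\gamma_2)$, so the $B_k^n$ span it and every such $G$ is an $h$--$\gamma$ B\'ezier curve.
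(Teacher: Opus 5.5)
Your proposal is correct and is the same argument as the paper's: run the recursive evaluation algorithm of Theorem \ref{recursivex} with $\sigma(i)=i$ starting from the blossom values $P_k^0$, note that the apex equals $G(x)$ by the $h$--$\gamma$ diagonal property, and read off the apex coefficients of the $P_k^0$. By definition these coefficients are the $h$--$\gamma$ Bernstein functions, and your extra check that the level-one subtriangle is the order-$(n-1)$ algorithm on $[a-h,b]$ at $x-h$ is accurate but not needed here, since the theorem only uses that definition as $G$-independent apex coefficients and not the recurrence \eqref{recurrenceBB}.
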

	\begin{proof}
		Equation \eqref{dualeqn} follows from the recursive evaluation algorithm in Theorem~\ref{recursivex} for the permutation $\sigma(i)=i$, $i=1, \ldots, n$ and the definition of the
		$h$--$\gamma$ Bernstein basis functions $B^n_k(x,[a,b];\gamma,h)$ for that permutation.
	\end{proof}

	\begin{corollary}\label{basis}
		The $h$--$\gamma$ Bernstein basis functions of order $n$ over the interval $[a, b]$ form a basis for the space $\pi_{n}(\gamma_{1},\gamma_{2})$.
	\end{corollary}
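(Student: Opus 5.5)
The plan is to show two things: the $n+1$ functions $B_{k}^{n}(x,[a,b];\gamma,h)$, $k=0,\ldots,n$, lie in $\pi_{n}(\gamma_{1},\gamma_{2})$, and they span it. Since $\pi_{n}(\gamma_{1},\gamma_{2})$ has dimension $n+1$ by Lemma \ref{lemmabasis}, these two facts together give a basis.

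\textbf{Spanning.} This follows directly from Theorem \ref{dual}. Every $G\in\pi_{n}(\gamma_{1},\gamma_{2})$ can be written as
\[
G(x)=\sum_{k=0}^{n} g\bigl(\Gamma(a-kh),\ldots,\Gamma(a-(n-1)h),\Gamma(b),\ldots,\Gamma(b-(k-1)h);h\bigr)\,B_{k}^{n}(x,[a,b];\gamma,h),
\]
which is a linear combination of the $h$--$\gamma$ Bernstein basis functions.

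\textbf{Membership.} I would use the dual functional construction. Fix $k$. Since $\{G_{n,j}(t;h)\}$ is a basis (Lemma \ref{lemmabasis}), and hence the symmetric multilinear forms correspond bijectively to elements of $\pi_{n}$ via the blossom (Theorem \ref{theorem:blossom}), it suffices to find $G_{k}\in\pi_{n}$ whose blossom values $P^{0}_{i}$ (the arguments appearing in Theorem \ref{dual}) equal $\delta_{ik}$. Then Theorem \ref{dual} gives $G_{k}=B_{k}^{n}$, so $B_{k}^{n}\in\pi_{n}$.

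This reduces to surjectivity of the linear map
\[
G\longmapsto (P^{0}_{0},\ldots,P^{0}_{n}), \qquad \pi_{n}(\gamma_{1},\gamma_{2})\to\mathbb{R}^{n+1}.
\]
By dimension count, surjectivity is equivalent to injectivity. Suppose all $P^{0}_{i}$ vanish. Then Theorem \ref{recursive} expresses every blossom value $g(\Gamma(u_{1}),\ldots,\Gamma(u_{n});h)$ as a combination of the $P^{0}_{i}$, so $g$ vanishes on all tuples of curve points. In particular, the diagonal gives $G\equiv 0$.

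\textbf{Alternative route.} Membership can also be checked directly from the recurrence in Theorem \ref{theorem:basis}, by induction on $n$. The factors $d(a-(k-1)h,x)$ and $d(x,b-kh)$ are linear combinations of $\gamma_{1}(x)$ and $\gamma_{2}(x)$. By translation invariance, $B^{n-1}(x-h,[a-h,b])$ lies in $\pi_{n-1}$ evaluated at $x-h$, and via $C(h)$ this equals a function of $\pi_{n-1}$ in $x$. The product of an element of $\pi_{1}$ with an element of $\pi_{n-1}$ lies in $\pi_{n}$, since $\pi_{m}$ is the homogeneous degree-$m$ forms in $(\gamma_{1},\gamma_{2})$.

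\textbf{Expected obstacle.} The main subtlety is the injectivity step. One must ensure that Theorem \ref{recursive} applies, which requires the nondegeneracy hypothesis $d(a-jh,b-ih)\neq 0$; this is already assumed. One must also check that the recursion, applied at $u_{j}=x-(j-1)h$, genuinely recovers $G(x)$ for all $x$. Since that is exactly Theorem \ref{recursivex}, the argument closes.
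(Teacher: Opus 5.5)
Your proof is correct, and its spanning half is the paper's: Theorem~\ref{dual} together with $\dim\pi_n(\gamma_1,\gamma_2)=n+1$.

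For membership, the paper just says ``by construction''. Each $B^n_k$ is a sum of products of $n$ factors $d(\cdot,x-jh)$ or $d(x-jh,\cdot)$ divided by constants, and each such factor lies in $\pi_1(\gamma_1,\gamma_2)$ by translation invariance. Your ``alternative route'' through the recurrence of Theorem~\ref{theorem:basis} is that same argument made explicit. The induction hypothesis legitimately applies on $[a-h,b]$, because its nondegeneracy conditions are a subset of those for $[a,b]$.

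Your main route is genuinely different. You show that the control-point map $T:\pi_n(\gamma_1,\gamma_2)\to\mathbb{R}^{n+1}$, $G\mapsto(P^0_0,\ldots,P^0_n)$, is injective: by Theorem~\ref{dual}, $T(G)=0$ forces $G=0$. A dimension count then makes $T$ bijective, and you identify $B^n_k=T^{-1}(e_k)$.

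What this buys you:
\begin{itemize}
\item It never inspects the formulas for $B^n_k$.
\item It exhibits the $B^n_k$ as the basis dual to the blossom functionals.
\item It gives membership, linear independence and uniqueness of control points (Corollary~\ref{controlpoints}) in one stroke.
\end{itemize}

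What it costs you: membership now depends on the full blossom machinery. That means existence and uniqueness of the blossom, hence Lemma~\ref{lemmabasis} and the standing linear-independence assumption on the $G_{n,k}$. The paper's membership check, by contrast, uses only translation invariance.

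You should also state why $T$ is linear. By uniqueness, the blossom of $\alpha G_1+\beta G_2$ is $\alpha g_1+\beta g_2$, since the latter is symmetric, multilinear, and reduces to $\alpha G_1+\beta G_2$ on the $h$--$\gamma$ diagonal.
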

	\begin{proof}
		By the construction of the $h$--$\gamma$ Bernstein basis functions from the recursive evaluation algorithm, each  $h$--$\gamma$ Bernstein function $B_{k}^{n}(x ;[a, b] ; \gamma, h)$  belongs to the space $\pi_{n}(\gamma_{1},\gamma_{2})$. By (\ref{dualeqn}), the functions  $\left\{B_{k}^{n}(x ;[a, b] ; \gamma, h)\right\}_{k=0}^{n}$ also span $\pi_{n}(\gamma_{1},\gamma_{2})$, which
		is a vector space of dimension $n + 1$. 
	\end{proof}
	\begin{corollary}\label{controlpoints}
		The control points of an  $h$--$\gamma$ Bernstein B\'{e}zier curve over the interval $[a, b]$ are unique.
	\end{corollary}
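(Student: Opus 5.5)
The uniqueness of control points follows directly from Corollary \ref{basis}: the functions $\left\{B_{k}^{n}(x,[a,b];\gamma,h)\right\}_{k=0}^{n}$ form a basis of $\pi_{n}(\gamma_{1},\gamma_{2})$, so they are linearly independent. I would therefore argue by linear independence. Suppose an $h$--$\gamma$ B\'{e}zier curve $G$ over $[a,b]$ has two representations,
\[
G(x)=\sum_{k=0}^{n} b_k B_{k}^{n}(x,[a,b];\gamma,h)=\sum_{k=0}^{n} \tilde b_k B_{k}^{n}(x,[a,b];\gamma,h).
\]
Subtracting gives $\sum_{k=0}^{n}(b_k-\tilde b_k)B_{k}^{n}(x,[a,b];\gamma,h)=0$. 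If the control points are vector valued, I would apply this identity coordinatewise, so that each coordinate is a scalar relation among the basis functions. Linear independence then forces $b_k=\tilde b_k$ for $k=0,\ldots,n$.

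I would also identify the control points explicitly. By Theorem \ref{dual}, the control points must coincide with the blossom values
\[
g(\Gamma(a-kh),\ldots,\Gamma(a-(n-1)h),\Gamma(b),\ldots,\Gamma(b-(k-1)h);h),
\]
where $g$ is the $h$--$\gamma$ blossom of $G$. This is well defined because the blossom itself is unique by Theorem \ref{theorem:blossom}.

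The only point requiring care is that linear independence is stated for real-valued functions in $\pi_n(\gamma_1,\gamma_2)$, while the curve may be vector valued. Reducing to coordinates, as above, handles this. Beyond that I expect no real obstacle, since the standing assumptions $d(a-jh,b-ih)\neq0$ already guarantee that the basis functions are defined.
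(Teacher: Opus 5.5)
Your proposal is correct and matches the paper's proof: both take linear independence of the basis from Corollary \ref{basis} and conclude that the coefficients in that basis, i.e.\ the control points, are unique. The coordinatewise reduction for vector-valued control points and the identification of the control points as blossom values via Theorem \ref{dual} are harmless extras the paper does not spell out.
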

	\begin{proof}
		Since by Corollary \ref{basis}, the function $B_{k}^{n}(x ;[a, b] ; \gamma, h)$  form a basis for the space $\pi_n(\gamma_{1},\gamma_{2})$, the  $h$--$\gamma$ B\'{e}zier curves have unique coefficients in this basis. Therefore the control points are unique.
	\end{proof}
	\begin{theorem}[Dual Functional Property]\label{dualfuncprop}
		Let  $G\in \pi_{n}(\gamma_{1},\gamma_{2})$ and let $g$ be the $h$--$\gamma$ blossom of $G$. Then the Bernstein B\'{e}zier control points of G are given by
		\begin{align}\label{dualcontrol}
			b_k=g(\Gamma(a-k h), \ldots, \Gamma(a-(n-1) h), \Gamma(b), \Gamma(b-h), \ldots, \Gamma(b-(k-1) h) ; h),\quad k=0,\ldots, n.
		\end{align}
	\end{theorem}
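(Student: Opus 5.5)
The plan is to combine Theorem~\ref{dual} with the uniqueness of control points in Corollary~\ref{controlpoints}. Fix $G\in\pi_{n}(\gamma_{1},\gamma_{2})$ with $h$--$\gamma$ blossom $g$. By Theorem~\ref{theorem:blossom}, $g$ exists and is unique, so the quantities on the right-hand side of \eqref{dualcontrol} are well-defined numbers, provided $d(a-jh,b-ih)\neq 0$ for $0\le i\le j\le n-1$, which is the standing hypothesis under which the $h$--$\gamma$ Bernstein basis over $[a,b]$ is defined.

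\textbf{Step 1: an explicit expansion.} By Theorem~\ref{dual}, applied via the recursive evaluation algorithm of Theorem~\ref{recursivex} with $\sigma(i)=i$, we have
\[
G(x)=\sum_{k=0}^{n} g\bigl(\Gamma(a-kh),\ldots,\Gamma(a-(n-1)h),\Gamma(b),\ldots,\Gamma(b-(k-1)h);h\bigr)\,B_{k}^{n}(x,[a,b];\gamma,h).
\]
So $G$ is exhibited as an $h$--$\gamma$ B\'ezier curve with control points equal to these blossom values.

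\textbf{Step 2: identify the coefficients.} Suppose also $G(x)=\sum_{k} b_k B_{k}^{n}(x,[a,b];\gamma,h)$ for some control points $b_k$. By Corollary~\ref{basis}, the functions $\{B_k^n\}_{k=0}^n$ form a basis of $\pi_n(\gamma_1,\gamma_2)$. Equivalently, Corollary~\ref{controlpoints} says the control points are unique. Subtracting the two representations gives a linear relation among the basis functions, so each coefficient vanishes, and $b_k$ equals the blossom value in \eqref{dualcontrol} for every $k$.

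\textbf{Main obstacle.} There is no computational difficulty here; the step most in need of care is Step 1, which leans on the informal definition of $B_k^n$ as the coefficients propagated from the apex in the $\sigma(i)=i$ algorithm. To make it rigorous, I would argue by induction on the level $k$ of the triangle. Each $P^k_i(x)$ is, by its recursive definition, a linear combination of the $P^0_j$ whose coefficient functions do not depend on $G$. By definition, these coefficients at the apex are the $B_j^n$.

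Then \eqref{Pikx-sig} gives $P^n_0(x)=G(x)$, while $P^0_j$ is the blossom value appearing in \eqref{dualcontrol}. This yields the expansion in Step 1 independently of $G$. Linear independence of the $B_k^n$ is needed for Step 2, and it follows from spanning: every $G$ in the $(n+1)$-dimensional space $\pi_n(\gamma_1,\gamma_2)$ is such a combination of the $n+1$ functions $B_k^n$, so they span and hence are independent.
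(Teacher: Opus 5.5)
Your proposal is correct and follows the paper's argument. The paper's proof just says the result follows from Theorem~\ref{dual}, leaving implicit the uniqueness of coefficients (Corollary~\ref{controlpoints}, via Corollary~\ref{basis}); you make that step explicit, and your spanning-implies-independence argument matches how the paper proves Corollary~\ref{basis}.
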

	\begin{proof}
		This result follows  from Theorem \ref{dual}.
	\end{proof}
	  \begin{theorem}
		Let $G(x)$ be an  $h$--$\gamma$ B{\'e}zier curve of degree $n$ over the interval $[a,b]$ with control points $P_i^0$, $i=0,\ldots,n$.
		Let $P^k_i(x)$, $k=0,\ldots,n$, $i=0,\ldots,n-k$, be the nodes in the  $h$--$\gamma$ recursive evaluation algorithm for $G(x)$ for the identity permutation.
		Then
		\begin{equation}
			P^k_i(x)=\sum_{j=0}^{k} P_{i+j}^0\, B^{k}_{j}\!\left(x+i h;\,[a,b]; \gamma, h\right).
		\end{equation}
	\end{theorem}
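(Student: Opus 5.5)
The plan is to identify the sub-triangle of the recursive evaluation algorithm with apex $P^k_i(x)$ as an order-$k$ recursive evaluation algorithm in its own right, over the shifted interval $[a-ih,b-ih]$. Then the definition of the $h$--$\gamma$ Bernstein basis functions gives the coefficients directly, and shift invariance (\ref{tinvariance}) converts them into the stated form. There is no need to invoke the blossom beyond what Theorem \ref{recursivex} already encodes.

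\textbf{Step 1: which nodes feed $P^k_i(x)$.} For the identity permutation the recurrence of Theorem \ref{recursivex} reads
\[
P_{l}^{m+1}(x)=\frac{d(x-mh,\,b-lh)}{d(a-(l+m)h,\,b-lh)}\,P_{l}^{m}(x)+\frac{d(a-(l+m)h,\,x-mh)}{d(a-(l+m)h,\,b-lh)}\,P_{l+1}^{m}(x).
\]
So $P^k_i(x)$ depends only on the nodes $P^m_{l}(x)$ with $i\le l$ and $l+m\le i+k$. At the base these are $P^0_i,\dots,P^0_{i+k}$.

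\textbf{Step 2: the sub-triangle is an order-$k$ algorithm on $[a',b']$.} Re-index by $l=i+l'$, with $0\le l'\le k-m$, and set $a'=a-ih$, $b'=b-ih$. The weights become
\[
\frac{d(x-mh,\,b'-l'h)}{d(a'-(l'+m)h,\,b'-l'h)},\qquad \frac{d(a'-(l'+m)h,\,x-mh)}{d(a'-(l'+m)h,\,b'-l'h)},
\]
for $m=0,\dots,k-1$. These are exactly the weights of the identity-permutation recursive evaluation algorithm of order $k$ over $[a',b']$. The nondegeneracy hypothesis $d(a'-jh,b'-ih')\ne 0$ for $0\le i'\le j\le k-1$ is a subset of the original hypothesis, so that algorithm is well defined.

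By the construction of the $h$--$\gamma$ Bernstein basis functions (put $1$ at the apex and propagate down, as in Theorem \ref{theorem:basis} and Figure \ref{fig3}), the coefficient of the $j$-th base node is $B^k_j(x,[a',b'];\gamma,h)$. Hence
\[
P^k_i(x)=\sum_{j=0}^k P^0_{i+j}\,B^k_j(x,[a-ih,b-ih];\gamma,h).
\]

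\textbf{Step 3: shift invariance.} Apply (\ref{tinvariance}) with $\Delta=ih$ and $t=x$, which gives $B^k_j(x+ih,[a,b];\gamma,h)=B^k_j(x,[a-ih,b-ih];\gamma,h)$ and hence the claim. As a check, for $k=1$ this gives $d(x+ih,b)/d(a,b)=d(x,b-ih)/d(a-ih,b-ih)$ by (\ref{dtrans}), matching the first-level weight.

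\textbf{Main obstacle.} The delicate point is Step 2: making rigorous that "the coefficients obtained by propagating $1$ down a triangle" depend only on the weights, so that an identical weight pattern yields identical basis functions. I would formalize this by induction on $k$ using the same edge-splitting as in the proof of Theorem \ref{theorem:basis}. Alternatively, one can argue through the blossom:
\begin{itemize}
\item freeze the $n-k$ arguments $\Gamma(a-(i+k)h),\dots,\Gamma(a-(n-1)h),\Gamma(b),\dots,\Gamma(b-(i-1)h)$ in (\ref{Pikx-sig});
\item the resulting $k$-variate symmetric multilinear function is the blossom of an element of $\pi_k(\gamma_1,\gamma_2)$ (membership follows from translation invariance);
\item apply Theorem \ref{dual} of order $k$ on $[a',b']$; its control points reassemble into exactly $P^0_{i+j}$.
\end{itemize}
The blossom route additionally requires the linear independence of $\{G_{k,j}\}$ for that $h$, which is why I prefer the purely combinatorial argument.

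The one possible pitfall is the sign and ordering convention of the interval arguments in (\ref{tinvariance}). The shift must be applied to $x$ and to both endpoints simultaneously, with $\det C(ih)\neq 0$, which holds since $C$ is invertible.
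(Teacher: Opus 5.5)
Your proof is correct, but your main route differs from the paper's. The paper argues through the blossom. By \eqref{RRalg}, the frozen function $Q^k_i$ is the $h$--$\gamma$ blossom of $P^k_i$. The dual functional property over $[c,d]=[a-ih,b-ih]$ then writes $P^k_i$ in the basis $B^k_j(\cdot,[a-ih,b-ih];\gamma,h)$, with coefficients equal to $Q^k_i$ evaluated at the shifted dual-functional arguments. Shift invariance converts the basis, and substituting back into \eqref{RRalg} reassembles those coefficients into $P^0_{i+j}$ via \eqref{h-gammablossom}. That is exactly the alternative in your bulleted list.

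Your primary route instead matches the edge weights of the sub-triangle with apex $P^k_i$ against those of the order-$k$ identity-permutation scheme over $[a-ih,b-ih]$. You then read the coefficients directly off the definition of the basis functions. This is more elementary. It is a formal identity about the triangular scheme, valid for arbitrary base values, and it needs neither $P^k_i\in\pi_k(\gamma_1,\gamma_2)$ nor any blossom. The paper's route is shorter given the machinery already built. It also shows conceptually that each node $P^k_i$ is itself an order-$k$ $h$--$\gamma$ B\'{e}zier curve whose control points are blossom values.

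The step you call the main obstacle is routine. The coefficient of a base node is the sum over upward paths of products of edge weights, so identical weight arrays give identical coefficients. The paper uses the same sub-triangle identification (the upper triangle, over $[a-h,b]$, at argument $x-h$) in its proof of Theorem \ref{theorem:basis}.

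One correction to your side remark: the blossom route does not need linear independence of $\{G_{k,j}\}$. The function $Q^k_i$ is symmetric, multilinear, and reduces to $P^k_i$ along the $h$--$\gamma$ diagonal by construction; see \eqref{Pikx-sig}. The derivation of the dual functional property from Theorem \ref{recursive} uses only these three properties and never invokes uniqueness of the blossom.
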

	\begin{proof}
		By Theorems \ref{recursive}  and  \ref{recursivex}, the  $h$--$\gamma$ blossom of $P^k_i(t)$ is given by \eqref{RRalg}. The dual functional property for the interval $[c,d]$ yields
		\begin{align}\label{pp2}
			P^k_i(t)=\sum_{j=0}^{k}
			Q^k_i\!\big(\Gamma(c-jh),\ldots,\Gamma(c-(k-1)h),\Gamma(d),\Gamma(d-h),\ldots,\Gamma(d-(j-1)h);h\big)\
			B^{k}_{j}\!\left(t;\,[c,d]; \gamma, h\right)
		\end{align}
		Select $[c,d]=[a-ih,b-ih]$. By  shift invariance (\ref{tinvariance}), $B^{k}_{j}\!\left(t;\,[c,d]; \gamma, h\right)=	B^{k}_{j}\!\left(t+ih;\,[a,b]; \gamma, h\right)$. Now the result follows from (\ref{RRalg}), (\ref{pp2}),  and (\ref{h-gammablossom}).
	\end{proof}
			\subsection{Partitions of Unity and Marsden Identities}
			\begin{theorem}[Partition of Unity]
				Let $\gamma_{1} = 1$, $\gamma_{2} = x$. Then
				\[
				\sum_{k=0}^{n} B_{k}^{n}(x,[a,b];\gamma,h) = 1.
				\]
					\end{theorem}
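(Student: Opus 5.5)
The plan is to apply the dual functional property (Theorem~\ref{dualfuncprop}) to the constant function $G(x)=1$, which lies in $\pi_n(1,x)$. By Example~\ref{polyblossom}, the $h$--$\gamma$ blossom of $G(x)=1$ in $\pi_n(\gamma_1,\gamma_2)$ is
\[
g\big((u_1,v_1),\ldots,(u_n,v_n);h\big)=u_1\cdots u_n .
\]
Before using this, I would confirm that it really is the blossom. It is symmetric and multilinear, since it is linear in each pair $(u_i,v_i)$ because it depends only on $u_i$. Along the $h$--$\gamma$ diagonal every first coordinate is $\gamma_1(t-jh)=1$, so the product equals $1=G(t)$. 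By the uniqueness part of Theorem~\ref{theorem:blossom}, this is therefore the blossom.

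Next I evaluate the control points given by \eqref{dualcontrol}. Each argument has the form $\Gamma(s)=(\gamma_1(s),\gamma_2(s))=(1,s)$, so its first coordinate is $1$. Hence for every $k=0,\ldots,n$,
\[
b_k=g\big(\Gamma(a-kh),\ldots,\Gamma(a-(n-1)h),\Gamma(b),\ldots,\Gamma(b-(k-1)h);h\big)=1\cdots 1=1 .
\]
Substituting $b_k=1$ into \eqref{dualeqn} of Theorem~\ref{dual} gives
\[
1=G(x)=\sum_{k=0}^n B_k^n(x,[a,b];\gamma,h),
\]
which is the claim.

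The only point needing care is that the hypotheses of Theorem~\ref{dual} hold. We need $d(a-jh,b-ih)\neq0$, and here $d(u,v)=v-u$, so this is the standing nondegeneracy assumption on $[a,b]$ and $h$ already imposed in the definition of the $h$--$\gamma$ B\'ezier curves. We also need the functions $G_{n,k}$ to be linearly independent, which is the standing assumption on $h$.

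As an independent check that avoids blossoming, I would induct on $n$ using the recurrence \eqref{recurrenceBB}. Summing over $k$ and shifting the index in the first term groups the terms as
\[
\sum_k \frac{d(a-kh,x)+d(x,b-kh)}{d(a-kh,b-kh)}\,B_k^{n-1}(x-h,[a-h,b];\gamma,h).
\]
With $d(u,v)=v-u$, each fraction equals $1$. The induction hypothesis then finishes the argument.
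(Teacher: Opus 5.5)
Your proof is correct and is essentially the paper's argument: Example~\ref{polyblossom} gives the blossom $u_1\cdots u_n$ of $G\equiv 1$, the dual functional property (Theorem~\ref{dualfuncprop}) then gives every control point $b_k=1$, and \eqref{dualeqn} reduces to the partition of unity. Your supplementary induction via \eqref{recurrenceBB} is also valid, because $d(a-kh,x)+d(x,b-kh)=d(a-kh,b-kh)$ when $\gamma_1\equiv 1$, and it confirms the result without using the blossom.
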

		\begin{proof}
				This result follows immediately from Example \ref{polyblossom} and Theorem \ref{dualfuncprop}.
		\end{proof}

		\begin{theorem}[Partition of Unity for Trigonometric Functions]\label{the:unit}
			Let $\gamma_{1}=\cos x$, $\gamma_{2}=\sin x$ and let $n$ be an even number. Then $\sum_{k=0}^{n} b_k B_k^{n}(x,[a,b];\gamma,h)= 1$, where $$ b_k=	c_n(h)
			\sum_{P\in \mathcal{P}_{n}}
			\prod_{(i,j)\in P} \cos(t_{i,k}-t_{j,k}),$$
			$c_n(h)$ is defined by \eqref{c(h)-Ex8}  in Example \ref{expblossom1}, and
			
		\[
		\begin{alignedat}{2}
			t_{i,k} = a - (k + i - 1)h, &\qquad i &= 1,\ldots,n-k,\quad\;\;\,\\
			t_{i,k} = b - (i - n + k - 1)h, &\qquad i &\,= n-k+1,\ldots,n.
		\end{alignedat}
		\]
		\end{theorem}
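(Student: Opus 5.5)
The plan is to apply the Dual Functional Property (Theorem \ref{dualfuncprop}) to the constant function $G(x)=1$, using the explicit $h$--$\gamma$ blossom of $1$ from Example \ref{expblossom1}. Since $n$ is even, $1=(\cos^2x+\sin^2x)^{n/2}\in\pi_n(\cos x,\sin x)$. Under the standing assumption that $h$ satisfies the nonvanishing condition of Example \ref{expblossom1}, the unique blossom of $1$ is
\[
g\big((u_1,v_1),\dots,(u_n,v_n);h\big)=c_n(h)\sum_{P\in\mathcal{P}_n}\prod_{(i,j)\in P}(u_iu_j+v_iv_j).
\]
By Theorem \ref{dual},
\[
1=\sum_{k=0}^n g\big(\Gamma(a-kh),\ldots,\Gamma(a-(n-1)h),\Gamma(b),\ldots,\Gamma(b-(k-1)h);h\big)\,B_k^n(x,[a,b];\gamma,h).
\]
It therefore remains only to identify the $k$-th blossom value with the stated $b_k$.

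To do this, I would write the $k$-th argument list as $\Gamma(t_{1,k}),\dots,\Gamma(t_{n,k})$ and check that the indexing in the statement matches it. The first $n-k$ slots are $a-kh,\dots,a-(n-1)h$; slot $i\in\{1,\dots,n-k\}$ carries $a-(k+i-1)h=t_{i,k}$. The last $k$ slots are $b,b-h,\dots,b-(k-1)h$; slot $i\in\{n-k+1,\dots,n\}$ carries $b-(i-(n-k+1))h=b-(i-n+k-1)h=t_{i,k}$. Both agree with the definition in the statement.

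Next, substitute $(u_i,v_i)=(\cos t_{i,k},\sin t_{i,k})$ into the blossom. Each pair factor becomes
\[
\cos t_{i,k}\cos t_{j,k}+\sin t_{i,k}\sin t_{j,k}=\cos(t_{i,k}-t_{j,k}).
\]
Hence the $k$-th blossom value equals $c_n(h)\sum_{P}\prod_{(i,j)\in P}\cos(t_{i,k}-t_{j,k})=b_k$, and the identity follows.

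There is no real obstacle here. The only care needed is the index bookkeeping for $t_{i,k}$, and noting the hypotheses under which the results are applied: $h$ must make $c_n(h)$ finite and the $G_{n,k}$ linearly independent, and $d(a-jh,b-ih)\neq0$ so that the basis functions are defined. I would state these as the standing assumptions inherited from Example \ref{expblossom1} and Theorem \ref{dual}. Symmetry of the blossom also makes the order of the arguments irrelevant.
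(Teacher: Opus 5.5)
Your proposal is correct and takes the same approach as the paper, whose proof simply invokes Example~\ref{expblossom1} together with the dual functional property (Theorem~\ref{dualfuncprop}). Your explicit check that the $k$-th argument list is $\Gamma(t_{1,k}),\ldots,\Gamma(t_{n,k})$, and that each pair factor becomes $\cos(t_{i,k}-t_{j,k})$, supplies the bookkeeping that the paper leaves implicit.
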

			\begin{proof}
				This result follows immediately from Example~\ref{expblossom1} and Theorem \ref{dualfuncprop}.
				
			\end{proof}
		
		Theorem \ref{the:unit} remains valid if we replace $\cos x$ and $\sin x$ by $\cos_{d}x$ and $\sin_{d}x$; the proof is almost identical. Similar results also hold for both $\Gamma=\left(\cosh x, \sinh x\right)$ and $\Gamma=\left(\cosh_d x, \sinh_d x\right)$. Once again the proofs are much the same.
		\begin{theorem}[Marsden identity]\label{theorem:marsden}
			\begin{align}\label{MarsId}
				(d(t, x))_{h}^{n}=\sum_{k=0}^{n}\left\{ \prod_{j=0}^{k-1} d(b-j h, x) \prod_{j=k}^{n-1} d(a-j h, x)\right\} B_{k}^{n}(t,[a, b] ; \gamma, h).
			\end{align}
			
		\end{theorem}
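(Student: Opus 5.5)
The Marsden identity \eqref{MarsId} should follow directly from the dual functional property (Theorem~\ref{dualfuncprop}) applied to the function $G(t)=(d(t,x))_h^n$, with $x$ held fixed. By \eqref{rgenel} and Lemma~\ref{lemmabasis}, this $G$ lies in $\pi_n(\gamma_1,\gamma_2)$. Example~\ref{expd} already identifies its $h$--$\gamma$ blossom as
\[
g\big((u_1,v_1),\ldots,(u_n,v_n);h\big)=\prod_{m=1}^{n}\big(\gamma_2(x)u_m-\gamma_1(x)v_m\big).
\]
By Theorem~\ref{dual}, the expansion of $G(t)$ in the basis $\{B_k^n(t,[a,b];\gamma,h)\}_{k=0}^n$ has coefficients equal to these blossom values at the arguments given in \eqref{dualcontrol}. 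So the proof reduces to evaluating this blossom at those arguments.

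The key observation is that each factor evaluated at $\Gamma(s)$ becomes
\[
\gamma_2(x)\gamma_1(s)-\gamma_1(x)\gamma_2(s)=d(s,x).
\]
Hence $g(\Gamma(s_1),\ldots,\Gamma(s_n);h)=\prod_{m=1}^n d(s_m,x)$. For the $k$-th coefficient, the arguments are:
\begin{itemize}
\item[] $\Gamma(a-jh)$ for $j=k,\ldots,n-1$, contributing $\prod_{j=k}^{n-1}d(a-jh,x)$;
\item[] $\Gamma(b-jh)$ for $j=0,\ldots,k-1$, contributing $\prod_{j=0}^{k-1}d(b-jh,x)$.
\end{itemize}
Substituting these into \eqref{dualeqn} gives \eqref{MarsId} exactly. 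We also need the standing hypothesis $d(a-jh,b-ih)\neq0$ for $0\le i\le j\le n-1$, so that the basis functions are defined. The identity then holds for every fixed $x$, and therefore as an identity in both $t$ and $x$.

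The only point needing care is justifying that Example~\ref{expd} really gives the blossom. The product on the right is clearly symmetric and multilinear. Along the $h$--$\gamma$ diagonal it evaluates to $\prod_{j=0}^{n-1}d(t-jh,x)=(d(t,x))_h^n$ by \eqref{basefunc}. Uniqueness in Theorem~\ref{theorem:blossom} then identifies it with the blossom. This step is routine, so I expect no genuine obstacle; the rest is bookkeeping of the index ranges.
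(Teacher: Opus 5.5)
Your proposal is correct and takes the same approach as the paper: it combines the blossom of $(d(t,x))_h^n$ from Example~\ref{expd} with the dual functional property (Theorem~\ref{dualfuncprop}). Your explicit evaluation $\gamma_2(x)\gamma_1(s)-\gamma_1(x)\gamma_2(s)=d(s,x)$ at each argument $\Gamma(s)$ supplies the index bookkeeping that the paper's one-line proof leaves implicit.
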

		\begin{proof}
			This result follows immediately from Example \ref{expd} and Theorem \ref{dualfuncprop}. 
		\end{proof}
		
		As an example of Marsden's identity,  consider the case $\gamma_1(x)=\cos x,\;\gamma_2(x)=\sin x$. In this case,  $d(u,v) = \sin(v-u)$ and 
		\[
		\bigl(\sin(t-x)\bigr)^{n}_{h}
		=
		\sum_{k=0}^{n}
		\left\{
		\prod_{j=0}^{k-1} \sin\!\bigl(b-jh-x\bigr)
		\prod_{j=k}^{n-1} \sin\!\bigl(a-jh-x\bigr)
		\right\}
		B^{n}_{k}(t,[a,b];\gamma,h).
		\]
	For another example consider the
	case where  $\gamma_1=1$ and $\gamma_{2}=x$. In this case by \eqref{MarsId} and  \eqref{hbernsteinbasis}, the Marsden identity becomes
		\begin{equation}\label{eq:marsden}
			\prod_{i=0}^{n-1} \frac{(x - t + ih)}{(b - a + ih)}
			=
			\sum_{j=0}^{n} (-1)^{j}
		 \,
			\frac{B_{n-j}^{\,n}\!\big(x;\,[a-(n-1)h,\, b]; -h\big)}{\binom{n}{j}}\;
			B_{j}^{\,n}\!\big(t;\,[a,b];h\big),
		\end{equation}
		where 	$B_{k}^{n}\!\big(t;\,[a,b];h\big)$ are the $h$-Bernstein basis functions in \cite{ron1}.
		Also, for  $h =0$, from \eqref{MarsId} and \eqref{gammaber}, we get
		\begin{equation}\label{eq:marsden2}
			\left( \frac{d(t, x)}{d(a, b)} \right)^{n}
			=
			\sum_{k=0}^{n} (-1)^{k} 
		\,
			\frac{B_{n-k}^{\,n}(x,[a,b])\,
			B_{k}^{\,n}(t, [a,b])}{\binom{n}{n-k}},
		\end{equation}
		where $B_{k}^{\,n}(x, [a,b])$ are the  Bernstein basis functions in  \eqref{gammaber}.
		\subsection{Interpolation}
	\begin{corollary}\label{cor:interpolation}
		Let $G\in \pi_{n}(\gamma_{1},\gamma_{2}) $ with control points $P_0,\ldots,P_n$
		over the interval $[a,b]$. Then $G$ interpolates its first and last control
		points. $G(a)=P_0$ and $G(b)=P_n$.
	\end{corollary}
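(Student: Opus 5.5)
The plan is to read off both endpoint identities directly from the Dual Functional Property (Theorem~\ref{dualfuncprop}), combined with the $h$--$\gamma$ diagonal axiom of the blossom. No evaluation of the Bernstein basis functions at the endpoints is needed.

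First, let $g$ be the $h$--$\gamma$ blossom of $G$, which exists and is unique by Theorem~\ref{theorem:blossom}. By Corollary~\ref{controlpoints} the control points $P_0,\ldots,P_n$ of $G$ over $[a,b]$ are unique. Hence they coincide with the blossom values in \eqref{dualcontrol}:
\[
P_k=g\big(\Gamma(a-kh),\ldots,\Gamma(a-(n-1)h),\Gamma(b),\Gamma(b-h),\ldots,\Gamma(b-(k-1)h);h\big),\quad k=0,\ldots,n .
\]

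Next, I specialize to the two extreme indices.

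For $k=0$ there are no $b$-arguments, so
\[
P_0=g\big(\Gamma(a),\Gamma(a-h),\ldots,\Gamma(a-(n-1)h);h\big).
\]
This is exactly the $h$--$\gamma$ diagonal evaluated at $t=a$, so the diagonal axiom gives $P_0=G(a)$.

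For $k=n$ there are no $a$-arguments, so
\[
P_n=g\big(\Gamma(b),\Gamma(b-h),\ldots,\Gamma(b-(n-1)h);h\big).
\]
This is the $h$--$\gamma$ diagonal at $t=b$, so $P_n=G(b)$.

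There is essentially no obstacle. The only point needing care is bookkeeping: one must check that the argument lists in \eqref{dualcontrol} for $k=0$ and $k=n$ really are the full diagonal sequences $t,t-h,\ldots,t-(n-1)h$, in that order. Symmetry of $g$ makes the ordering irrelevant in any case. One should also note that the standing hypothesis $d(a-jh,b-ih)\neq 0$ is what guarantees the basis, and hence the representation, is well defined.
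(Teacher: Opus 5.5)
Your proposal is correct and takes the same route as the paper's proof. The paper also reads off $P_0$ and $P_n$ from the dual functional property (Theorem~\ref{dualfuncprop}), observes that the $k=0$ and $k=n$ argument lists are exactly the $h$--$\gamma$ diagonal at $t=a$ and $t=b$, and concludes from the diagonal axiom.
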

	
	\begin{proof}
		By the dual functional property,  the control points
		$P_0,\dots,P_n$ of $G$ are given by
		\[
		P_k = g(\Gamma(a-kh),\dots,\Gamma(a-(n-1)h),
		\Gamma(b),\Gamma(b-h),\dots,\Gamma(b-(k-1)h);h),
		\qquad k=0,\dots,n,
		\]
		where $g$ is the  $h$--$\gamma$ blossom of $G$. Evaluating the curve at $x=a$ and $x=b$ yields
		\[
		G(a)=g(\Gamma(a),\dots,\Gamma(a-(n-1)h);h) = P_0,\qquad
		G(b)=g(
		\Gamma(b),\Gamma(b-h),\dots,\Gamma(b-(n-1)h);h) = P_n.
		\]
	\end{proof}
	
	\begin{corollary}[Interpolation]
		Let $G \in \pi_{n}(\gamma_{1},\gamma_{2})$  with
		control points $P_0,\ldots,P_n$ over the interval $[a,b]$. If $b=a-nh$ and $h\neq 0$, then $G$ interpolates all its
		control points. In particular,
	$G(a-kh)=P_k$ for  $k=0,\ldots,n$.
	\end{corollary}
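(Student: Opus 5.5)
The plan is to reduce everything to the dual functional property (Theorem \ref{dualfuncprop}) together with the $h$--$\gamma$ diagonal axiom, in the same way as the proof of Corollary \ref{cor:interpolation}. The whole argument is then a matter of checking that, when $b=a-nh$, the argument list defining $P_k$ is a diagonal string of $n$ consecutive $h$-steps. Let $g$ be the $h$--$\gamma$ blossom of $G$. By Theorem \ref{dualfuncprop}, for $k=0,\ldots,n$,
\[
P_k=g\bigl(\Gamma(a-kh),\ldots,\Gamma(a-(n-1)h),\Gamma(b),\Gamma(b-h),\ldots,\Gamma(b-(k-1)h);h\bigr).
\]

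First substitute $b=a-nh$. The last $k$ arguments become $\Gamma(a-nh),\Gamma(a-(n+1)h),\ldots,\Gamma(a-(n+k-1)h)$. Combined with the first $n-k$ arguments, the full argument list is
\[
\Gamma(a-kh),\Gamma(a-(k+1)h),\ldots,\Gamma(a-(n+k-1)h).
\]
This is exactly $\Gamma(t),\Gamma(t-h),\ldots,\Gamma(t-(n-1)h)$ with $t=a-kh$, already listed in the natural order. Symmetry is therefore not even needed.

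Next apply the $h$--$\gamma$ diagonal axiom to get $P_k=G(a-kh)$ for each $k$. The endpoint cases $k=0$ and $k=n$ reproduce Corollary \ref{cor:interpolation}, since $a-nh=b$. The nodes $a-kh$, $k=0,\ldots,n$, are equally spaced by $|h|$ between $a$ and $b$, so they are distinct precisely because $h\neq 0$. This is why the hypothesis is stated, and it makes the result a genuine interpolation at $n+1$ distinct points.

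The only point needing care is the standing nondegeneracy assumption $d(a-jh,b-ih)\neq 0$ for $0\le i\le j\le n-1$. With $b=a-nh$ this reads $d(a-jh,a-(n+i)h)\neq 0$, and I would simply keep it as a hypothesis inherited from the definition of the Bernstein basis over $[a,b]$. I would also note that $b=a-nh$ fits the paper's convention that $[a,b]$ denotes the interval between the endpoints regardless of order. Beyond this index bookkeeping, I expect no real obstacle.
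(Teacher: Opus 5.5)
Your proposal is correct and follows essentially the same route as the paper: substitute $b=a-nh$ into the dual functional formula for $P_k$ and recognize the resulting argument list $\Gamma(a-kh),\ldots,\Gamma(a-(n+k-1)h)$ as the $h$--$\gamma$ diagonal at $t=a-kh$, giving $P_k=G(a-kh)$. Your added remarks, on keeping the nondegeneracy condition $d(a-jh,a-(n+i)h)\neq 0$ as a standing hypothesis and on $h\neq 0$ making the nodes distinct, are accurate but not needed for the argument itself.
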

	
	\begin{proof}
		Let $g$ be the  $h$--$\gamma$ blossom of $G$. Then, by the dual functional property and the $h$--$\gamma$ diagonal evaluation
		\begin{align*}
			P_k
			&= g(\Gamma(a-kh),\ldots,\Gamma(a-(n-1)h),
			\Gamma(a-nh),\ldots,\Gamma(a-(n+k-1)h))=G(a-kh).
		\end{align*}
	\end{proof}
	\subsection{Degree Elevation}
	We now derive degree elevation formulas for  $h$--$\gamma$ Bernstein bases. Typically, the spaces $\pi_n(\gamma_1, \gamma_2)$ are not nested. But if $1 \in \pi_k(\gamma_1, \gamma_2)$, then $\pi_n(\gamma_1, \gamma_2) \subset \pi_{n+k}(\gamma_1, \gamma_2)$. For the polynomial spaces in Example \ref{ex1}, $1 \in \pi_1(\gamma_1, \gamma_2)$ so there are degree elevation formulas from $\pi_n(\gamma_1, \gamma_2)$ to $\pi_{n+1}(\gamma_1, \gamma_2)$. For the trigonometric and hyperbolic spaces in Examples \ref{e2}--\ref{ex4}, $1 \in \pi_2(\gamma_1, \gamma_2)$ so there are degree elevation formulas from $\pi_n(\gamma_1, \gamma_2)$ to $\pi_{n+2}(\gamma_1, \gamma_2)$. Next, we consider these two special cases.
	\begin{proposition}\label{thm:degpol}
		Let $\gamma_{1}(x)=1$ and $\gamma_{2}(x)=x$. Then for the space of polynomials of degree $n$, the $h$-Bernstein basis functions satisfy
		the degree elevation formula
		\begin{align}\label{def1}
				B_k^n(x,[a,b];h)
			=
			\frac{n+1-k}{n+1}\,B_k^{n+1}(x,[a,b];h)
			+
			\frac{k+1}{n+1}\,B_{k+1}^{n+1}(x,[a,b];h).
		\end{align}
	\end{proposition}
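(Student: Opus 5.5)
The plan is to verify \eqref{def1} directly from the explicit formula \eqref{hbernsteinbasis} for the $h$-Bernstein basis functions. Expanding both sides and comparing them should take only a few lines. Alternatively, one could use the dual functional property (Theorem \ref{dualfuncprop}) together with the blossom $u_1\cdots u_n$ of $1$ (Example \ref{polyblossom}) to multiply blossoms. That route is more delicate, because the $h$--$\gamma$ diagonal for order $n+1$ involves the extra point $\Gamma(x-nh)$, so I would only fall back on it if needed.

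To keep the bookkeeping light, I introduce three abbreviations:
\[
A_k(x)=\prod_{j=0}^{k-1}(x-a+jh),\qquad D_m(x)=\prod_{j=0}^{m-1}(b-x+jh),\qquad E_m=\prod_{j=0}^{m-1}(b-a+jh).
\]
With these, \eqref{hbernsteinbasis} reads $B_k^n(x,[a,b];h)=\binom{n}{k}A_k(x)D_{n-k}(x)/E_n$. The first step is the pair of binomial identities
\[
\tfrac{n+1-k}{n+1}\tbinom{n+1}{k}=\tbinom{n}{k},\qquad \tfrac{k+1}{n+1}\tbinom{n+1}{k+1}=\tbinom{n}{k}.
\]
They show that both terms on the right-hand side of \eqref{def1} carry the same coefficient $\binom{n}{k}/E_{n+1}$.

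Next, I factor out the common part $A_k(x)D_{n-k}(x)$ from the two terms:
\[
A_k(x)D_{n+1-k}(x)=A_k(x)D_{n-k}(x)\,(b-x+(n-k)h),\qquad A_{k+1}(x)D_{n-k}(x)=A_k(x)D_{n-k}(x)\,(x-a+kh).
\]
The two leftover linear factors sum to a constant, since $(b-x+(n-k)h)+(x-a+kh)=b-a+nh$. Finally, $E_{n+1}=E_n\,(b-a+nh)$, so this factor cancels with the last factor of $E_{n+1}$. What remains is $\binom{n}{k}A_k(x)D_{n-k}(x)/E_n=B_k^n(x,[a,b];h)$, which is exactly the left-hand side of \eqref{def1}.

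No step is a genuine obstacle. The only point needing care is the boundary case $k=n$, where the formula involves $B^{n+1}_{n+1}$ and $D_0=1$; this fits the same computation. We also assume throughout that $b-a+jh\neq0$ for $0\le j\le n$, which is the standing nondegeneracy hypothesis $d(a-jh,b-ih)\neq0$ specialized to $d(u,v)=v-u$.
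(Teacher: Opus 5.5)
Your proof is correct and is essentially the paper's argument. The paper also works directly from \eqref{hbernsteinbasis}: it shows that $\frac{b-x+(n-k)h}{b-a+nh}B_k^n$ and $\frac{x-a+kh}{b-a+nh}B_k^n$ equal the two terms on the right and adds them, using that the two linear factors sum to $b-a+nh$. You carry out the same computation in reverse, expanding the right-hand side and factoring out the common part.
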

	
	\begin{proof}
		From  formula (\ref{hbernsteinbasis}) for
		$B_k^n(x,[a,b];h)$,
		\[
		\frac{b-x+(n-k)h}{\,b-a+nh\,}\,
		B_k^n(x,[a,b];h)
		=
		\frac{\binom{n}{k}}{\binom{n+1}{k}}\,
		B_k^{n+1}(x,[a,b];h)
		=
		\frac{n+1-k}{n+1}\,
		B_k^{n+1}(x,[a,b];h),
		\]
		
		\[
		\frac{x-a+kh}{\,b-a+nh\,}\,
		B_k^n(x,[a,b];h)
		=
		\frac{\binom{n}{k}}{\binom{n+1}{k+1}}\,
		B_{k+1}^{n+1}(x,[a,b];h)
		=
		\frac{k+1}{n+1}\,
		B_{k+1}^{n+1}(x,[a,b];h).
		\]
		Adding these two formulas yields the result.
	\end{proof}
	Notice that the coefficients $\frac{n+1-k}{n+1}$ and $\frac{k+1}{n+1}$ in \eqref{def1}
	are independent of $a,b,h$. Proposition \ref{thm:degpol} is a special case of Proposition 3.1 in \cite{ron3}.
	\begin{proposition} Let $\gamma_1(x)=\cos x$, $\gamma_{2}(x)=\sin x$ and $h=0$.
		The trigonometric Bernstein basis functions satisfy the	degree elevation formula
		\begin{align}\nonumber
			B_k^n(x,[a,b];\gamma)
			=&
			\frac{(n+2-k)(n+1-k)}{(n+1)(n+2)}\,B_k^{n+2}(x,[a,b];\gamma)
			+
			\frac{2\cos(b-a)(k+1)(n+1-k)}{(n+1)(n+2)}\,B_{k+1}^{\,n+2}(x,[a,b];\gamma)\\& \label{def2}
		+
			\frac{(k+1)(k+2)}{(n+1)(n+2)}\,B_{k+2}^{\,n+2}(x,[a,b];\gamma).
		\end{align}
	\end{proposition}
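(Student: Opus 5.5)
We prove the formula by multiplying $B_k^n(x,[a,b];\gamma)$ by a representation of the constant function $1$ in the order-$2$ trigonometric Bernstein basis. We then rewrite each resulting monomial as a multiple of an order-$(n+2)$ basis function.

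\textbf{Step 1 (explicit form).} Since $h=0$ and $d(u,v)=\sin(v-u)$, formula \eqref{gammaber} gives
\[
B_k^n(x,[a,b];\gamma)=\binom{n}{k}s^k t^{n-k},\qquad s=\frac{\sin(x-a)}{\sin(b-a)},\quad t=\frac{\sin(b-x)}{\sin(b-a)}.
\]

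\textbf{Step 2 (key identity).} We need
\[
s^2+2\cos(b-a)\,st+t^2=1 .
\]
There are two ways to get it. The first is to apply Theorem \ref{the:unit} with $n=2$ and $h=0$. There $c_2(0)=\sec 0=1$, and the dual functional control points $g(\Gamma(a),\Gamma(a))$, $g(\Gamma(a),\Gamma(b))$, $g(\Gamma(b),\Gamma(b))$ of $G=1$, computed from the blossom $u_1u_2+v_1v_2$, equal $1$, $\cos(b-a)$, $1$. Combined with $B^2_0=t^2$, $B^2_1=2st$ and $B^2_2=s^2$, this is exactly the identity.

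The second way is to check it directly. Put $\alpha=x-a$ and $\beta=b-x$, so $\alpha+\beta=b-a$. The identity becomes
\[
\sin^2\alpha+\sin^2\beta+2\cos(\alpha+\beta)\sin\alpha\sin\beta=\sin^2(\alpha+\beta).
\]
To verify it, expand $\cos(\alpha+\beta)=\cos\alpha\cos\beta-\sin\alpha\sin\beta$ and $\sin^2(\alpha+\beta)=(\sin\alpha\cos\beta+\cos\alpha\sin\beta)^2$. The cross terms $2\sin\alpha\sin\beta\cos\alpha\cos\beta$ match on both sides. What remains is $\sin^2\alpha(1-\sin^2\beta)+\sin^2\beta(1-\sin^2\alpha)=\sin^2\alpha\cos^2\beta+\cos^2\alpha\sin^2\beta$, which holds.

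\textbf{Step 3 (multiply and convert).} Multiplying gives
\[
B_k^n=\binom nk\bigl(s^k t^{n+2-k}+2\cos(b-a)\,s^{k+1}t^{n+1-k}+s^{k+2}t^{n-k}\bigr).
\]
Each monomial $s^j t^{n+2-j}$ equals $B_j^{n+2}/\binom{n+2}{j}$. The needed binomial ratios are
\[
\frac{\binom nk}{\binom{n+2}{k}}=\frac{(n+2-k)(n+1-k)}{(n+1)(n+2)},\quad
\frac{\binom nk}{\binom{n+2}{k+1}}=\frac{(k+1)(n+1-k)}{(n+1)(n+2)},\quad
\frac{\binom nk}{\binom{n+2}{k+2}}=\frac{(k+1)(k+2)}{(n+1)(n+2)}.
\]
Substituting these reproduces \eqref{def2} exactly, including the middle coefficient $2\cos(b-a)(k+1)(n+1-k)/((n+1)(n+2))$.

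The only nontrivial step is Step 2, the identity $s^2+2\cos(b-a)st+t^2=1$. I would present it through the partition-of-unity result, Theorem \ref{the:unit}, and add the direct trigonometric check as confirmation. The rest is binomial bookkeeping.
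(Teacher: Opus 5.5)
Your proof is correct and is essentially the paper's argument: write $B_k^n=\binom{n}{k}s^k t^{n-k}$, multiply by the identity $s^2+2\cos(b-a)\,st+t^2=1$, and convert each monomial $s^jt^{n+2-j}$ into $B_j^{n+2}/\binom{n+2}{j}$; your binomial ratios give exactly the coefficients in \eqref{def2}. The only difference is how you verify the quadratic identity: the paper squares and adds the sum-to-product expressions for $\cos\bigl(x-\tfrac{a+b}{2}\bigr)$ and $\sin\bigl(x-\tfrac{a+b}{2}\bigr)$, whereas you either expand directly in $\alpha=x-a$, $\beta=b-x$ or read it off Theorem~\ref{the:unit} at $n=2$, $h=0$ with control points $1,\cos(b-a),1$, and both of those are valid checks.
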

	
	\begin{proof}
		For $h=0$, the trigonometric Bernstein basis functions are given by (\ref{gammaber})
		\begin{align}\label{trigbasis}
				B_k^n(x,[a,b];\gamma)
			= \binom{n}{k}
			\frac{\sin^k(x-a)\,\sin^{\,n-k}(b-x)}{\sin^n(b-a)} .
		\end{align}
		Consider the trigonometric identities
		\begin{align*}
			\frac{\sin(x-a)+\sin(b-x)}{2\sin\big(\tfrac{b-a}{2}\big)}
			= \cos\!\Big(x-\frac{a+b}{2}\Big), \qquad
			\frac{\sin(x-a)-\sin(b-x)}{2\cos\big(\tfrac{b-a}{2}\big)}
			= \sin\!\Big(x-\frac{a+b}{2}\Big).
		\end{align*}
		Squaring and adding yields
		\[
		\left(
		\frac{\sin(x-a)+\sin(b-x)}{2\sin(\tfrac{b-a}{2})}
		\right)^2
		+
		\left(
		\frac{\sin(x-a)-\sin(b-x)}{2\cos(\tfrac{b-a}{2})}
		\right)^2
		=1,
		\]
		which  is equivalent to
		\[
	\frac{\sin^2(x-a)}{\sin^2(b-a)}
	+
	\frac{2\cos(b-a)\,\sin(x-a) \sin(b-x)}{\sin^2(b-a)}
	+
	\frac{\sin^2(b-x)}{\sin^2(b-a)} = 1.
		\]
Multiplying both sides of this equation by $B_k^n(x,[a,b];\gamma)$ and applying \eqref{trigbasis} yields formula  \eqref{def2}.
	\end{proof}

  \subsection{Subdivision}
  \begin{proposition}[Subdivision Algorithm]\label{subdivision}
  	Let $P_k$, $k=0,\dots,n$, be the control points for an $h$-- $\gamma$ B\'ezier curve
  	$G(x) \in \pi_n(\gamma_1,\gamma_2)$ over the interval $[a,b]$.
  	Then the control points for the restriction of $G(x)$ to the subintervals $[a,t]$ and $[t,b]$
  	are given by
  	\[
  	L_k=P^k_0(t)\quad\text{and}\quad R_k=P_k^{\,n-k}(t),\qquad k=0,\dots,n,
  	\]
  	where $P^k_0(t)$  are the intermediate points of the evaluation algorithm for $G(x)$ with $\sigma(i)=i$ and $P_k^{\,n-k}(t)$ are the intermediate points of the evaluation algorithm for $G(x)$ with $\sigma(i)=n+1-i$,   evaluated at $x = t$.
  \end{proposition}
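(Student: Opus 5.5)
The approach is to combine the dual functional property (Theorem \ref{dualfuncprop}) with the explicit blossom formulas for the intermediate nodes in Theorem \ref{recursivex}. Since the blossom $g$ of $G$ does not depend on the interval, the control points of the restriction of $G$ to $[a,t]$ are obtained by applying Theorem \ref{dualfuncprop} with $b$ replaced by $t$:
\[
L_k=g(\Gamma(a-kh),\ldots,\Gamma(a-(n-1)h),\Gamma(t),\Gamma(t-h),\ldots,\Gamma(t-(k-1)h);h).
\]
Likewise, replacing $a$ by $t$ gives the control points over $[t,b]$:
\[
R_k=g(\Gamma(t-kh),\ldots,\Gamma(t-(n-1)h),\Gamma(b),\ldots,\Gamma(b-(k-1)h);h).
\]
Throughout I assume the nondegeneracy conditions $d(\cdot,\cdot)\neq 0$ needed for the subintervals. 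It then remains to identify these blossom values with the nodes of the two evaluation algorithms.

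\textbf{Left half.} For $\sigma(i)=i$, formula \eqref{Pikx-sig} with $i=0$ and $x=t$ gives
\[
P^k_0(t)=g(\Gamma(a-kh),\ldots,\Gamma(a-(n-1)h),\Gamma(t),\Gamma(t-h),\ldots,\Gamma(t-(k-1)h);h),
\]
because the inserted arguments are $t-(\sigma(j)-1)h=t-(j-1)h$ for $j=1,\ldots,k$. This matches $L_k$ term by term, using symmetry of $g$ only to reorder arguments.

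\textbf{Right half.} For $\sigma(i)=n+1-i$, take \eqref{Pikx-sig} with $i=k$ and level $n-k$. The remaining $a$-arguments run from $a-(n-k+k)h$ to $a-(n-1)h$, which is an empty list. The $b$-arguments are $\Gamma(b),\ldots,\Gamma(b-(k-1)h)$. The inserted arguments are $t-(\sigma(j)-1)h=t-(n-j)h$ for $j=1,\ldots,n-k$, that is, $t-(n-1)h,\ldots,t-kh$. By symmetry this is exactly $R_k$, for $k=0,\ldots,n$. The extreme cases $k=0$ (giving $R_0=G(t)$) and $k=n$ (giving $R_n=P^0_n$) are consistent with this.

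The main subtlety is that the two halves require different permutations. With $\sigma(i)=i$, the node $P^{n-k}_k(t)$ would insert $t,\ldots,t-(n-k-1)h$ rather than $t-kh,\ldots,t-(n-1)h$. The right subinterval needs the shifts with the largest offsets, and that is why the reversed ordering is used. I would also check the index bookkeeping in \eqref{Pikx-sig}, in particular that the count of $a$-arguments is $n-k-i$, so that it vanishes for $i=k$ at level $n-k$. Finally, uniqueness of control points (Corollary \ref{controlpoints}) confirms that these blossom values are the control points.
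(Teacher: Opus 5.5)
Your proposal is correct and follows the paper's proof. Like the paper, you apply the dual functional property on $[a,t]$ and $[t,b]$, then identify the resulting blossom values with the nodes in \eqref{Pikx-sig}: for $\sigma(i)=i$ at $i=0$, and for $\sigma(i)=n+1-i$ at $i=k$, level $n-k$. Your explicit bookkeeping (the empty list of $a$-arguments, the inserted arguments $t-kh,\ldots,t-(n-1)h$, and the nondegeneracy assumption on the subintervals) spells out what the paper leaves implicit.
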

  \begin{proof}
  	Let $g$ be the  $h$--$\gamma$  blossom of $G$. By the dual functional property (Theorem \ref{dualfuncprop}) and \eqref{Pikx-sig}, the control points of the left segment over $[a,t]$ are
  	\[
  	L_k =g(\Gamma(a-kh), \ldots,\Gamma( a-(n-1) h), \Gamma(t), \Gamma(t-h), \ldots, \Gamma(t-(k-1) h) ; h)=P^k_0(t).
  	\]
  	Similarly, for the right segment over $[t,b]$ the control points are
  	\[
  	R_k =g(\Gamma(t-kh), \ldots,\Gamma( t-(n-1) h), \Gamma(b), \Gamma(b-h), \ldots, \Gamma(b-(k-1) h) ; h)=P_k^{n-k}(t).\]
  \end{proof}
  
  \textit{Recursive Midpoint Subdivision Algorithm:}
We can now construct a recursive subdivision algorithm for  $h$--$\gamma$ B\'{e}zier curves 
$G$ defined over the interval $[a,b]$, as follows.
First take $t=\tfrac{a+b}{2}$, the midpoint of $a$ and $b$, then subdivide $G$ into two curve segments. Iteratively subdivide each curve segment at the midpoint of its parametric domain. Then after each iteration the number of new curve segments doubles and at the $N$th iteration we will have $2^{N}$ curve segments 
corresponding to the subintervals $[t_{i},t_{i+1}]$, $i=0,1,\dots,2^{N}-1$, where 
$t_{i}=a+\tfrac{i}{2^{N}}(b-a)$. The control polygons for all these curve segments form a piecewise  linear approximation to the original $h$--$\gamma$ B\'{e}zier curve $G$. 


%

 \begin{theorem}\label{theorem_subd}
 	The control polygons generated by the recursive midpoint subdivision algorithm converge 
 	pointwise to the original  $h$--$\gamma$ B\'{e}zier  curve.
 \end{theorem}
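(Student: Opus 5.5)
The plan is to show that, on each small subinterval produced by subdivision, every control point is a blossom value whose arguments all lie close together, and then to use continuity of the blossom to compare it with the curve.

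Fix a level $N$ and a subinterval $[t_i,t_{i+1}]$ of length $\delta_N=(b-a)/2^N$. By Proposition~\ref{subdivision}, applied repeatedly, the control points of the curve segment over $[t_i,t_{i+1}]$ are those of the restriction of $G$ to that interval. The dual functional property (Theorem~\ref{dualfuncprop}) then gives them explicitly:
\[
P_k^{(N,i)}=g\bigl(\Gamma(t_i-kh),\ldots,\Gamma(t_i-(n-1)h),\Gamma(t_{i+1}),\ldots,\Gamma(t_{i+1}-(k-1)h);h\bigr),\qquad k=0,\ldots,n .
\]
This holds provided the non-degeneracy conditions $d(t_i-jh,t_{i+1}-lh)\neq 0$ are satisfied. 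Here $g$ is the single $h$--$\gamma$ blossom of $G$, which does not depend on the interval. The key point is that each control point is obtained from a diagonal value by moving some of the $n$ arguments by at most $\delta_N$.

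Now take a point $x$ where we want convergence. Choose the segment containing $x$, so $|t_i-x|\le\delta_N$ and $|t_{i+1}-x|\le\delta_N$. Compare $P_k^{(N,i)}$ with the diagonal value
\[
G(x)=g\bigl(\Gamma(x),\ldots,\Gamma(x-(n-1)h);h\bigr).
\]
After a symmetric rearrangement (allowed by the symmetry axiom), the $n$ arguments of $P_k^{(N,i)}$ are $\Gamma(t_i-jh)$ for $j=k,\ldots,n-1$ and $\Gamma(t_{i+1}-jh)$ for $j=0,\ldots,k-1$. Each of these is within $\omega(\delta_N)$ of the corresponding $\Gamma(x-jh)$, where $\omega$ is a modulus of continuity of $\Gamma$ on a compact neighbourhood of $\{x-jh\}$. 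Since $g$ is multilinear, and hence continuous (see the Remark after the blossom definition), a telescoping estimate replaces one argument at a time and gives
\[
|P_k^{(N,i)}-G(x)|\le n\,\|g\|\,M^{n-1}\omega(\delta_N)\to 0,
\]
where $M$ bounds $|\Gamma|$ on that neighbourhood. The bound is uniform in $k$, so every vertex of the control polygon of the segment containing $x$ converges to $G(x)$. The control polygon over that segment is a convex hull of these vertices parametrized over $[t_i,t_{i+1}]$; for instance, vertex $k$ is placed at parameter $t_i+k\delta_N/n$. Hence the polygon value at $x$ also converges to $G(x)$.

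The main obstacle is the non-degeneracy hypothesis. For small intervals, $d(t_i-jh,t_{i+1}-lh)$ with $j\ne l$ tends to $d(x-jh,x-lh)$. This is nonzero for admissible $h$ by the standing assumption that the $G_{n,k}$ are independent and $\Gamma$ is injective modulo scaling; if needed, I would impose this explicitly as a hypothesis. For $j=l$, the control points do not require division by $d(t_i-jh,t_{i+1}-jh)$ to be well defined, since they are blossom values and the dual functional formula still defines them. So I would phrase the argument entirely via the blossom formula rather than via the recursive algorithm, which avoids that degeneracy.
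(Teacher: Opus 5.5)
Your proof is correct. Its core is the same as the paper's: by the dual functional property, the control points of a subdivided segment are values of the single blossom $g$ at small perturbations of an $h$--$\gamma$ diagonal, and continuity of $g$ finishes the argument.

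The difference lies in how you reach an arbitrary $x$. The paper fixes a dyadic point $c$ and lets $N\to\infty$ along the segments $[c,c+(b-a)/2^N]$, so that their control points converge to $G(c)$. It then only asserts that convergence passes from the dense set of dyadic points to all of $[a,b]$ by continuity of $G$ and $g$. That step is not automatic, because convergence on a dense set plus continuity of the limit does not by itself give convergence elsewhere. An estimate like yours is what actually justifies it.

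You take the segment containing $x$ and bound every vertex by $n\|g\|M^{n-1}\omega(\delta_N)$ through telescoping. This treats every $x$ directly, and it also makes the polygon's parametrization explicit, which the paper never specifies. Moreover, if you take $M$ and $\omega$ on the compact set $\bigcup_{j=0}^{n-1}([a,b]-jh)$, the bound is independent of $x$. You then in fact get uniform convergence under mere continuity of $\gamma_1,\gamma_2$. The $C^1$ hypothesis of Theorem~\ref{theorem_subd2} is needed only for the rate, i.e.\ to get $\omega(\delta)=O(\delta)$.

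One side remark in your last paragraph is false, though harmless. Independence of the $G_{n,k}$ does not imply $d(x-jh,x-lh)\neq 0$ for $j\neq l$. For $\gamma_1=\cos$, $\gamma_2=\sin$, $n=2$, $h=\pi$, the $G_{2,k}$ are independent since $2\cos h\neq 0$, yet $d(x-\pi,x)=\sin\pi=0$.

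You never need that limiting non-degeneracy, because your limit argument only manipulates blossom values, which are always defined. What you do need are the finite-level conditions $d(t_i-jh,t_{i+1}-lh)\neq 0$ for $0\le l\le j\le n-1$, including $j=l$. These make the $h$--$\gamma$ Bernstein basis on each $[t_i,t_{i+1}]$ exist, so that your blossom values really are the control points produced by subdivision; the paper assumes them tacitly as well. In the example just given they hold for every small $\delta_N>0$, since $d(t_i,t_{i+1})=d(t_i-\pi,t_{i+1}-\pi)=\sin\delta_N$ and $d(t_i-\pi,t_{i+1})=-\sin\delta_N$.
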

 \begin{proof}
 	Let $G \in \pi_{n}(\gamma_{1},\gamma_{2})$ be an $h$--$\gamma$ B\'{e}zier curve over the interval $[a,b]$ 
 	and let $g$ denote the $h$--$\gamma$  blossom of $G$. At the $N$-th iteration of the recursive subdivision algorithm, 
 	choose and fix any $c \in \{t_{i}\}_{i=0}^{2^{N}-1}$ and consider the control points of the curve segment corresponding to the subinterval $\big[c,\,d_N\big],\; d_N=c+\frac{b-a}{2^{N}}$. By the dual functional property (Theorem \ref{dualfuncprop}), these control points are given by
 	\[
 	g\left(\Gamma(c-k h), \ldots, \Gamma(c-(n-1) h), \Gamma\left(d_N\right), \ldots, \Gamma\left(d_N-(k-1) h\right) ; h\right),
 	\quad k=0,\dots,n.
 	\]
 	Since $\gamma_{1}$ and $\gamma_{2}$ are continuous, taking the limit as $N \to \infty$ and using the continuity of the $h$--$\gamma$ blossom we obtain
 \begin{align*}
 		&\lim_{N \to \infty}
 	g\left(\Gamma(c-k h), \ldots, \Gamma(c-(n-1) h), \Gamma\left(d_N\right), \ldots, \Gamma\left(d_N-(k-1) h\right) ; h\right)\\&
 	= 	g\left(\Gamma(c-k h), \ldots, \Gamma(c-(n-1) h), \Gamma\left(c\right), \ldots, \Gamma\left(c-(k-1) h\right) ; h\right)\\&
 	= G(c).
 \end{align*}
 	Thus, the control polygons converge pointwise to $G$ at every dyadic subdivision point $c$ (i.e., points of the form $j/2^r, \; j \in \mathbb{Z},\ r \in \mathbb{N}$.). Since the dyadic points are dense in $[a,b]$, for any  $x \in [a,b]$ there exists a sequence of dyadic points approaching $x$. The continuity 
 	of the function $G$ and its $h$--$\gamma$ blossom $g$ imply that the convergence at dyadic points extends to all $x \in [a,b]$, so the control polygons generated by recursive subdivision converge pointwise to the $h$--$\gamma$ B\'{e}zier curve $G$.
 \end{proof}
  
  \begin{theorem}\label{theorem_subd2} Let $\gamma_{1},\gamma_{2} \in C^1[a,b]$. Then
  the control polygons generated 
  	by recursive midpoint subdivision converge uniformly to the original 
  	 $h$--$\gamma$ B\'{e}zier curve.
  \end{theorem}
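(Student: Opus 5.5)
We prove uniform convergence by a quantitative version of the argument in Theorem \ref{theorem_subd}: we show that every control point of every level-$N$ segment lies within $O(2^{-N})$ of a nearby curve value, with a constant independent of the segment. Throughout, fix $N$ and a segment $[c,d_N]$, $d_N=c+(b-a)/2^N$. Its control points are
\[
P_k^{(N)}=g\bigl(\Gamma(c-kh),\ldots,\Gamma(c-(n-1)h),\Gamma(d_N),\ldots,\Gamma(d_N-(k-1)h);h\bigr).
\]
Compare each $P_k^{(N)}$ with $G(c)=g(\Gamma(c),\Gamma(c-h),\ldots,\Gamma(c-(n-1)h);h)$. After using symmetry to reorder arguments, the two blossom values differ only in $k$ arguments, where $\Gamma(d_N-jh)$ replaces $\Gamma(c-jh)$ for $j=0,\ldots,k-1$. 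Multilinearity gives a telescoping sum of $k$ terms. In each term one slot holds $\Gamma(d_N-jh)-\Gamma(c-jh)$ and the others hold values $\Gamma(s)$ with $s$ in a fixed compact set
\[
K=[\min(a,b)-(n-1)|h|-|b-a|,\ \max(a,b)+(n-1)|h|+|b-a|].
\]
The paper's standing assumption says $\gamma_1,\gamma_2$ are continuous on an open set containing the evaluation points; we assume $\gamma_i\in C^1$ on a neighborhood of $K$.

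Next we bound the blossom. By (\ref{blossom}), $g$ is the fixed multilinear form $\sum_k c_k s_{n,k}$, so $|g(w_1,\ldots,w_n)|\le M\prod\|w_i\|$ with $M$ depending only on $G$ and $h$. On $K$ we have $\|\Gamma(s)\|\le A:=\max_K\|\Gamma\|$. The mean value theorem applied to each component on $K$ gives $\|\Gamma(d_N-jh)-\Gamma(c-jh)\|\le L|d_N-c|=L|b-a|2^{-N}$, where $L=\sqrt2\max_{i}\max_K|\gamma_i'|$. Hence
\[
|P_k^{(N)}-G(c)|\le nMA^{n-1}L|b-a|\,2^{-N},
\]
uniformly in $k$ and in the segment. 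If the curve is vector valued, apply this componentwise.

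Finally we pass to uniform convergence of the polygons. On $[c,d_N]$ the control polygon is the piecewise linear path through $P_0^{(N)},\ldots,P_n^{(N)}$. We parametrize it on $[c,d_N]$ by the standard convention, for instance uniformly at nodes $c+k(d_N-c)/n$. Every point of that polygon piece is a convex combination of two consecutive control points, so it lies within $C2^{-N}$ of $G(c)$. Also $|G(x)-G(c)|\le \|G'\|_{\infty}|b-a|2^{-N}$ for $x\in[c,d_N]$, because $G\in\pi_n(\gamma_1,\gamma_2)$ is $C^1$, being a polynomial in $\gamma_1,\gamma_2$. Combining the two estimates gives $\sup_{x\in[a,b]}|\mathrm{poly}_N(x)-G(x)|\le C'2^{-N}\to0$. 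The main obstacle is bookkeeping rather than substance: we must ensure that the Lipschitz and blossom bounds are uniform over all segments. This is exactly what the compact set $K$ and the fixed multilinear coefficients $c_k$ provide, together with a precise statement of how the polygon is parametrized so that ``uniform convergence'' is meaningful.
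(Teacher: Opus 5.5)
Your argument is correct and is essentially the paper's proof. Your telescoping terms from $G(c)=P_0^{(N)}$ to $P_k^{(N)}$ are exactly the paper's consecutive differences $L_{j+1}-L_j$, each bounded by multilinearity and the mean value theorem by $O(|b-a|2^{-N})$, and your convexity step does the job of the paper's pairing of endpoint interpolation $L(\tilde a)=G(\tilde a)$ with its polygon-length bound. Your explicit assumption that $\gamma_1,\gamma_2$ are $C^1$ on a neighborhood of the compact set $K$ of shifted arguments, together with your operator-norm bound on $g$, makes precise what the paper's mean value step and its constant $M$ tacitly require, since the stated hypothesis $C^1[a,b]$ does not cover the shifted points $a-kh$.
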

  
  \begin{proof}
  	Consider an $h$--$\gamma$ B\'{e}zier curve $G \in \pi_{n}(\gamma_{1},\gamma_{2})$ defined over the interval $[a,b]$ and let $g$ be its $h$--$\gamma$ blossom. 
  	If we subdivide $G$ at a point $x \in [a,b]$, then by the dual functional property (Theorem \ref{dualfuncprop}) the control points of the left segment are
  	\[
  	L_{k} =g\left(\Gamma(a-kh), \ldots,\Gamma( a-(n-1) h), \Gamma(x), \Gamma(x-h), \ldots, \Gamma(x-(k-1) h) ; h\right), 
  	\quad k=0,\dots,n.
  	\]
  	By the multilinear property of the $h$--$\gamma$ blossom,
 \begin{align*}
 		&L_{k+1}-L_{k} \\&= g\left(\Gamma(a-(k+1)h), \ldots,\Gamma( a-(n-1) h), \Gamma(x), \Gamma(x-h), \ldots, \Gamma(x-(k-1) h), \Gamma(x-kh)-\Gamma(a-kh); h\right).
 \end{align*}
 By the mean value theorem, there exists $\xi_1,\, \xi_2 \in (a,x)$ such that \[
\Gamma(x-kh)-\Gamma(a-kh) 
= (x-a)\,(\gamma'_{1}(\xi_1-kh),\gamma'_{2}(\xi_2-kh)).
\]
 Let 
  \begin{align*}
  	M = \max_{0 \le k \le n-1} \;
  	\Bigl|
  	g\bigl(
  	&\Gamma(y-(k+1)h), \ldots, \Gamma(y-(n-1)h), \\
  	&\Gamma(x), \Gamma(x-h), \ldots, \Gamma(x-(k-1)h), \\
  	&(\gamma'_1(z_1-kh), \gamma'_2(z_2-kh)); h
  	\bigr)
  	\Bigr|
  \end{align*}
  	over all $x,y,z_1,z_2 \in [a,b]$. It follows that $
  	|L_{k+1}-L_{k}| \leq  M |x-a| \leq  M |b-a|,
  	\quad k=0,\dots,n-1$. Hence
  	\begin{align}\label{subequa}
  			\sum_{k=0}^{n-1} |L_{k+1}-L_{k}| < n|b-a|M.
  	\end{align}
  	A similar argument shows that for the right segment with control points $R_{k}$,
  	\[
  	\sum_{k=0}^{n-1} |R_{k+1}-R_{k}| < n |b-a| M.
  	\]
  	Now let $\widetilde{G}$ denote a segment of the original curve $G$ obtained after $m$ iterations of midpoint subdivision, and let $L(t)$ denote its associated control polygon. Then $\widetilde{G}$ is the
  	restriction of $G$ over a subinterval $[\tilde{a},\tilde{b}] \subset [a,b]$ of length $(b-a)/2^{m}$, and by Corollary \ref{cor:interpolation}, $G$ and $L$ coincide at $\tilde{a}$ and $\tilde{b}$. Therefore, for any $t \in [\tilde{a},\tilde{b}]$, 
  \begin{align*}
  	|G(t)-L(t)|
  	\leq |G(t)-L(\tilde a)| + |L(\tilde a)-L(t)|=|G(t)-\widetilde{G}(\tilde a)| + |L(\tilde a)-L(t)|.
  \end{align*}
  	By the mean value theorem,
  	\[
  	|G(t)-G(\tilde{a})| \leq |t-\tilde{a}|\max_{\tau \in [\tilde{a},\tilde{b}]}|G'(\tau)|.
  	\]
  	On the other hand, by \eqref{subequa}
  	\[
  	|L(\tilde{a})-L(t)| \leq n|\tilde{b}-\tilde{a}|M.
  	\]
  	Combining to the last two estimates yields
  	\[
  	|G(t)-L(t)| \leq \frac{(b-a)\tilde{M}}{2^{m}},
  	\]
  	where $\tilde{M}=\max_{\tau \in [a,b]}|G'(\tau)|+nM$.  
  	
  	Since $\tilde{M}$ is independent of  $m$, this argument shows that the control polygons generated by recursive midpoint subdivision converge uniformly to the  $h$--$\gamma$ B\'{e}zier curve $G$ at an exponential rate.
  \end{proof}

\appendix
\section*{Appendix A: Conditions Guaranteeing the Linear Independence of the Functions $\boldsymbol{G_{n,k}(t;h)}$} \label{appxA}
\addcontentsline{toc}{section}{Appendix A: Conditions Guaranteeing the Linear Independence of the Functions $G_{n,k}$}
\setcounter{equation}{0}
\renewcommand{\theequation}{A.\arabic{equation}}

\setcounter{theorem}{0}
\renewcommand{\thetheorem}{A}

We will use the notation $\rho^{[0]}(t)=t$, $\rho^{[1]}(t)=\rho(t)=t-h$, and $\rho^{[j]}(t)=\rho(\rho^{[j-1]}(t))$, $j>1$; 
the column vector notation $\vec{v}$ and $\vec v^T$ for the transposed of $\vec{v}$; the size of a finite set $J$ 
will be denoted by $|J|$, and the sum of the elements of $J$ by $s(J)$; the determinant of a square matrix $A$ will be denoted by $|A|$;  
and the $q$-binomial coefficients (polynomials in $q$) notation $\gauss{n}{k}=(q;q)_n/((q;q)_k(q;q)_{n-k})$, $k=0,\ldots,n$, where 
$(a;q)_n=\prod_{j=0}^n(1-q^ja)$, $n\in\mathbb{N}$, $(a;q)_0=1$. In what follows we omit the dependence on the variable $t$ since it is not 
needed. Then the functions in \eqref{hbasis} are
\begin{align}
	\label{Gnk-def} 
	G_{n,k}=\sum_{J\subseteq\{0,\ldots,n-1\},\,|J|=k}\,\,\prod_{j\in J}\gamma_1(\rho^{[j]})
	\prod_{j\in J'=\{0,\ldots,n-1\}\setminus J}\gamma_2(\rho^{[j]}), \qquad k=0,\ldots,n.  
\end{align}
\begin{proposition} $($Linear Independence$)$ 
	Let $C=C(\rho)$, and let $\lambda_1$ and $\lambda_2$ be the eigenvalues of $C$. Suppose that $|C| \neq 0$ and that if 
	$C$ is diagonalizable then $q=\lambda_1/\lambda_2$ is not a zero of any of the $q$-polynomials $\gauss{n}{k}$, $k=1,\ldots,n-1$.  
	Then the functions $G_{n,k}$, $k=0,\ldots,n$ are linearly independent. 
\end{proposition}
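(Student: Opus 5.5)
The plan is to encode all the $G_{n,k}$ in one generating polynomial, diagonalize (or Jordanize) $C$, and reduce linear independence to the invertibility of a coefficient matrix. Throughout, I use that the monomials $\gamma_1^{n-m}\gamma_2^{m}$, $m=0,\ldots,n$, are linearly independent, as is implicit in $\dim\pi_n(\gamma_1,\gamma_2)=n+1$.

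\textbf{Generating polynomial.} From \eqref{Gnk-def},
\[
\prod_{j=0}^{n-1}\bigl(s\,\gamma_1(\rho^{[j]})+\gamma_2(\rho^{[j]})\bigr)=\sum_{k=0}^{n}G_{n,k}\,s^{k}.
\]
By translation invariance, $\bigl(\gamma_1(\rho^{[j]}),\gamma_2(\rho^{[j]})\bigr)^T=C^{j}\bigl(\gamma_1,\gamma_2\bigr)^T$. Suppose $\sum_k c_kG_{n,k}=0$. Expanding the product in a monomial basis $\{\phi_1^m\phi_2^{n-m}\}$, with $\phi_1,\phi_2$ an invertible linear change of $\gamma_1,\gamma_2$, I want to show that the $(n+1)\times(n+1)$ matrix $A$ with $G_{n,k}=\sum_m A_{k,m}\phi_1^m\phi_2^{n-m}$ is nonsingular. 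Equivalently, the polynomials $p_m(s)$ defined by
\[
\prod_{j}\bigl(s\gamma_1(\rho^{[j]})+\gamma_2(\rho^{[j]})\bigr)=\sum_m p_m(s)\,\phi_1^m\phi_2^{n-m}
\]
should form a basis of polynomials of degree $\le n$ in $s$.

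\textbf{Diagonalizable case.} Choose $\phi_1,\phi_2$ as eigencoordinates, so that $\phi_i(\rho^{[j]})=\lambda_i^{j}\phi_i$. Write $s\gamma_1+\gamma_2=\alpha(s)\phi_1+\beta(s)\phi_2$. Here $\alpha,\beta$ are linearly independent affine functions of $s$, because the change of basis is invertible. Then
\[
\prod_{j=0}^{n-1}\bigl(\alpha\lambda_1^{j}\phi_1+\beta\lambda_2^{j}\phi_2\bigr)
=\lambda_2^{n(n-1)/2}\prod_{j=0}^{n-1}\bigl(q^{j}\alpha\phi_1+\beta\phi_2\bigr),
\]
where $\lambda_2\neq0$ since $|C|\neq0$. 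The $q$-binomial theorem,
\[
\prod_{j=0}^{n-1}(q^jx+y)=\sum_m q^{m(m-1)/2}\gauss{n}{m}x^my^{n-m},
\]
then gives
\[
p_m(s)=\lambda_2^{n(n-1)/2}q^{m(m-1)/2}\gauss{n}{m}\,\alpha(s)^m\beta(s)^{n-m}.
\]
The polynomials $\alpha^m\beta^{n-m}$, $m=0,\ldots,n$, form a basis of polynomials of degree $\le n$: homogenize and use that $(\alpha,\beta)$ is an invertible linear image of $(s,1)$. The prefactors are nonzero because $q\neq0$ (again from $|C|\neq0$), because $\gauss{n}{0}=\gauss{n}{n}=1$, and because of the hypothesis on $\gauss{n}{k}$ for $1\le k\le n-1$. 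So the $p_m$ form a basis, and hence the $G_{n,k}$ are linearly independent.

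\textbf{Non-diagonalizable case.} This is the step I expect to require the most care. Now $C$ has a single eigenvalue $\lambda\neq0$ and a Jordan form. Choose $\phi_1,\phi_2$ so that
\[
\phi_1(\rho^{[j]})=\lambda^j\phi_1,\qquad \phi_2(\rho^{[j]})=\lambda^j\bigl(\phi_2+j\lambda^{-1}\phi_1\bigr),
\]
or the transposed convention, whichever the Jordan block dictates. With $s\gamma_1+\gamma_2=\alpha\phi_1+\beta\phi_2$, the product becomes
\[
\lambda^{n(n-1)/2}\prod_{j=0}^{n-1}\bigl((\alpha+j\lambda^{-1}\beta)\phi_1+\beta\phi_2\bigr).
\]
The coefficient of $\phi_1^m\phi_2^{n-m}$ is therefore $\beta^{n-m}e_m(\alpha,\alpha+\lambda^{-1}\beta,\ldots)$, with $e_m$ the elementary symmetric function. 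This is a homogeneous form of degree $n$ in $(\alpha,\beta)$ that equals $\binom{n}{m}\alpha^m\beta^{n-m}$ plus terms with higher powers of $\beta$. These forms are triangular with respect to the basis $\{\alpha^m\beta^{n-m}\}$, with nonzero diagonal entries $\binom{n}{m}$, so they form a basis. This matches the $q\to1$ limit of the previous case, where no hypothesis on $\gauss{n}{k}$ is needed. The remaining bookkeeping is to fix the Jordan convention so that the triangularity really holds.
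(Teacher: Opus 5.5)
Your proof is correct. Its core computations match the paper's: eigen- or Jordan coordinates $\vec\varphi=M\vec\gamma$, the $q$-binomial theorem when $C$ is diagonalizable, and a triangular matrix with diagonal entries $\lambda^{\binom{n}{2}}\binom{n}{k}$ in the Jordan case. The genuine difference is how you handle the change of basis.

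The paper introduces auxiliary functions $H_{n,k}$, built from $\varphi_1,\varphi_2$ exactly as $G_{n,k}$ is built from $\gamma_1,\gamma_2$, and shows they span $\pi_n(\gamma_1,\gamma_2)$. It then proves, as a separate step, that each $H_{n,k}$ lies in $\mathrm{span}\{G_{n,l}\}$. It does this by expanding every factor $m_{\nu,1}\gamma_1+m_{\nu,2}\gamma_2$ and collecting elementary symmetric functions of $\gamma_1/\gamma_2$ at the shifted points, which yields explicit coefficients $a_{n,k,l}(M)$. A dimension count finishes.

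Your generating polynomial $\prod_{j}\bigl(s\gamma_1(\rho^{[j]})+\gamma_2(\rho^{[j]})\bigr)=\sum_k G_{n,k}s^k$ turns the change of basis into the invertible affine substitution $(s,1)\mapsto(\alpha(s),\beta(s))$ in the auxiliary variable. The full transition matrix from $\{G_{n,k}\}$ to $\{\phi_1^m\phi_2^{n-m}\}$ is then read off in one step. The paper's second stage is not needed, including its quotients $m_{\nu,1}/m_{\nu,2}$ and $\gamma_1/\gamma_2$, which tacitly assume nonvanishing denominators.

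Your route also gives the converse at no cost. Since the $\alpha^m\beta^{n-m}$ are independent, if $q$ is a zero of $\gauss{n}{m}$ for some $1\le m\le n-1$, then $p_m\equiv 0$, a column of your matrix vanishes, and the $G_{n,k}$ are dependent. So for diagonalizable $C$ the $q$-binomial condition is necessary as well as sufficient. This agrees with the paper's $\cos/\sin$, $n=2$ example, where $q=e^{\pm 2ih}$ and $1+q=0$ exactly when $h\in\pi/2+\pi\mathbb{Z}$.

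What the paper's route provides that yours leaves implicit is the closed form of the coefficients $a_{n,k,l}(M)$. Finally, no bookkeeping remains in your Jordan case. With the convention you wrote down, the triangularity is already verified, and the transposed convention only interchanges the roles of $\alpha$ and $\beta$.
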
 
\begin{proof} 
	With $\vec{\gamma}=(\gamma_1,\gamma_2)^T$ we have $\vec{\gamma}(\rho)=C\vec{\gamma}$, hence $\vec{\gamma}(\rho^{[j]})=C^j\vec{\gamma}$, $j\in\mathbb{N}_0$. 
	We consider two cases. \\ 
	
	\noindent\underline{Case 1}. $C$ is diagonalizable. Then $C=M^{-1}DM$, where $D$ is a diagonal matrix with diagonal entries 
	$\lambda_1$ and $\lambda_2$. Set $\vec\vp=(\vp_1,\vp_2)^T=M\vec\gamma$. Then $\vec\vp(\rho)=D\vec\vp$ and 
	$\vec\vp(\rho^{[j]})=D^j\vec\vp$, $j\in\mathbb{N}_0$. Consider the functions 
	\begin{eqnarray}
		\label{Hnk-def} 
		\begin{aligned}
			H_{n,k}&=\sum_{J\subseteq\{0,\ldots,n-1\},\,|J|=k}\,\,\prod_{j\in J}\vp_1(\rho^{[j]})\prod_{j\in J'}\vp_2(\rho^{[j]}) \\ 
			&=\vp_1^k\vp_2^{n-k}\lambda_2^{{n\choose 2}}\sum_{J\subseteq\{0,\ldots,n-1\},\,|J|=k}q^{s(J)}=\vp_1^k\vp_2^{n-k}\lambda_2^{n\choose 2}q^{k\choose 2}\gauss{n}{k},  
		\end{aligned}
	\end{eqnarray}
	$k=0,\ldots,n$, where we used the $q$-binomial formula $(-x;q)_n=\prod_{j=0}^{n-1}(1+q^jx)=\sum_{k=0}^n\gauss{n}{k}x^k$. 
	By assumption $\gauss{n}{k}\neq 0$ for any $k=1,\ldots,n-1$, which is so if $q\notin\{e^{2\pi i \nu/n}\}_{\nu=1}^{n-1}$.  
	Then \eqref{Hnk-def} shows that $\pi_n(\vp_1,\vp_2)={\rm span}\{H_{n,k}\}_{k=0}^n$. 
	Since $\vec\gamma=M^{-1}\vp$, we also have $\pi_n(\gamma_1,\gamma_2)=\pi_n(\vp_1,\vp_2)$. 
	
	Now we will show that $\{H_{n,k}\}_{k=0}^n\subset{\rm span}\{G_{n,k}\}_{k=0}^n$. This is obvious if $C=D$. So suppose that 
	$C\neq D$. Let $M=[m_{j,l}]$ and set $\al_{\nu}=m_{\nu,1}/m_{\nu,2}$, $\nu=1,2$, and $r=\gamma_1/\gamma_2$. 
	Using \eqref{Hnk-def} and \eqref{Gnk-def}, we derive  
	\begin{eqnarray}
		\label{Hnk-C1}
		\begin{aligned}
			H_{n,k}&=\sum_{J\subseteq\{0,\ldots,n-1\},\,|J|=k}\,\,\prod_{j\in J}\bigl(m_{1,1}\gamma_1(\rho^{[j]})+m_{1,2}\gamma_2(\rho^{[j]})\bigr)
			\prod_{j\in J'}\bigl(m_{2,1}\gamma_1(\rho^{[j]})+m_{2,2}\gamma_2(\rho^{[j]})\bigr) \\ 
			&=m_{1,2}^k m_{2,2}^{n-k}\Bigl\{\prod_{j=0}^{n-1}\gamma_2(\rho^{[j]})\Bigr\}\sum_{J\subseteq\{0,\ldots,n-1\},\,|J|=k}\,\,
			\prod_{j\in J}(\al_1 r(\rho^{[j]})+1)\prod_{j\in J'}(\al_2 r(\rho^{[j]})+1) \\ 
			&=m_{1,2}^k m_{2,2}^{n-k}\Bigl\{\prod_{j=0}^{n-1}\gamma_2(\rho^{[j]})\Bigr\}a_{n,k,l}(M)s_{n,l}(r,r(\rho),\ldots,r(\rho^{[n-1]}))
			=m_{1,2}^k m_{2,2}^{n-k}\sum_{l=0}^n a_{n,k,l}(M)G_{n,l}, 
		\end{aligned} 
	\end{eqnarray} 
	where $s_{n,l}$ are the elementary symmetric multilinear  functions of $n$ variables and 
	\begin{align}
		\nonumber 
		a_{n,k,l}(M)=\sum_{\nu=0}^k \al_1^{\nu}\al_2^{l-\nu}{l\choose{\nu}}{{n-l}\choose{k-\nu}}, \qquad l=0,\ldots,n. \end{align} 
	Therefore $\pi_n(\gamma_1,\gamma_2)\subset{\rm span}\{G_{n,k}\}_{k=0}^n$. Since ${\rm dim}(\pi_n(\gamma_1,\gamma_2))=n+1$, it follows that  
	$\{G_{n,k}\}_{k=0}^n$ are linearly independent. \\ 
	
	\noindent\underline{Case 2.} $C$ is not diagonalizable. Then $C=M^{-1}UM$, where 
	$U=\left[\begin{array}{cc} \la & 1 \\ 0 & \la \end{array}\right]$. Again set  $\vec\vp=M\vec\gamma$. Then 
	$\vec\vp(\rho^{[j]})=U^j\vec\vp$, where 
	$U^j=\la^{j-1}\left[\begin{array}{cc} \la & j \\ 0 & \la \end{array}\right]$, $j\in\mathbb{N}_0$. In this case the functions in 
	\eqref{Hnk-def} become 
	\begin{eqnarray}
		\label{Hnk-C2}
		\begin{aligned}
			H_{n,k}&=\sum_{J\subseteq\{0,\ldots,n-1\},\,|J|=k}\,\,\prod_{j\in J}\bigr(\la^j\vp_1+\la^{j-1}j\vp_2\bigr)
			\prod_{j\in J'}(\la^j\vp_2) \\ 
			&=\la^{n\choose 2}\vp_2^n\sum_{J\subseteq\{0,\ldots,n-1\},\,|J|=k}\,\,\prod_{j\in J}(\vp_1/\vp_2+j/\la) \\ 
			&=\la^{n\choose 2}\vp_2^n\sum_{l=0}^k{{n-k+l}\choose l}(\vp_1/\vp_2)^l \la^{l-k}s_{n,k-l}(0,\ldots,n-1). 
		\end{aligned}
	\end{eqnarray} 
	With $\vec H_n=(H_{n,0},\ldots,H_{n,n})^T$ and $\vec\vp_n=(\vp_2^n,\vp_1\vp_2^{n-1},\ldots,\vp_1^n)^T$ it follows from  
	\eqref{Hnk-C2} that $\vec H_n=A_n\vec\vp_n$, where the $(n+1)\times(n+1)$ matrix $A_n=A_n(\la)$ is lower-triangular 
	and has diagonal entries $\la^{n\choose 2}{n\choose k}$, $k=0,\ldots,n$. Therefore $A_n$ is invertible and 
	$\pi_n(\vp_1,\vp_2)={\rm span}\{H_{n,k}\}_{k=0}^n$. The rest of the proof for Case 2 is the same as for Case 1. 
\end{proof}

\section*{Acknowledgments}

This work was carried out while the first author was visiting Rice University and was supported by the Scientific and Technological Research Council of T\"{u}rkiye (T\"{U}B\.{I}TAK) under the B\.{I}DEB-2219 International Postdoctoral Research Fellowship Program.


\begin{thebibliography}{00}

	
	\bibitem{cetin2}
	Di{\c{s}}ib{\"u}y{\"u}k \c{C}., Goldman R. (2015). A unifying structure for polar
	forms and for Bernstein B{\'e}zier curves, J.  Approx. Theory,
	192,  234--249.
		\bibitem{cetin3}
	Di{\c{s}}ib{\"u}y{\"u}k \c{C}., Goldman R. (2016). A unified approach to non-polynomial B-spline curves based on a novel variant of the polar form. Calcolo, 53(4), 751-781.
	
	
	\bibitem{lyche}
	Lyche, T. (1999). Trigonometric splines; a survey with new results. Shaping Preserving Representations in Computer-Aided Geometric Design, 201-227.
	
	\bibitem{goldman1}
	Goldman, R. (1985). P\'olya's urn model and computer aided geometric design. SIAM J. Alg. Disc. Meth. 6, 1–28.
	
	\bibitem{goldman2}
	Goldman, R., Barry, P. (1991). Shape parameter deletion for P\'olya curves. Numer. Algorithms 1, 121–137.
	
	
	
	
	\bibitem{ron3}Goldman, R., Simeonov, P. (2015). Two essential properties of $(q,h)$-Bernstein-B\'ezier curves. Applied Numerical Mathematics, 96, 82-93.
	
		\bibitem{gonsor}
	Gonsor, D., Neamtu, M. (1994). Non-polynomial polar forms. Curves and Surfaces in Geometric Design (Chamonix-Mont-Blanc, 1993), 193-200.

	
	
	\bibitem{ron1}
	Simeonov, P., Zafiris, V.,  Goldman, R. (2011). $h$-Blossoming: A new approach to algorithms and identities for $h$-Bernstein bases and $h$-B\'{e}zier curves. Computer Aided Geometric Design, 28(9), 549-565.
	
	\bibitem{stancu1}
	Stancu, D. (1968). Approximation of functions by a new class of linear polynomial operators. Rev. Roumaine Math. Pures Appl. 13, 1173–1194.
	
	\bibitem{stancu2}
	Stancu, D. (1984). Generalized Bernstein approximating operators. In: Itinerant Seminar on Functional Equations, Approximation and Convexity. Cluj-Napoca,
	1984. Univ. "Babes-Bolyai", Cluj-Napoca, pp. 185–192. Preprint 84-6.
	
	


	
\end{thebibliography}
\end{document}